\newtheorem{lemma}{Lemma}
\newtheorem{theorem}{Theorem}
\newcommand{\Teich}{\mathcal{T}}
\newcommand{\SLC}{\mathrm{SL}_2(\mathbb{C})}
\newcommand{\slc}{\mathfrak{sl}_2(\mathbb{C})}
\newcommand{\slo}{\mathfrak{sl}_2}
\newcommand{\Syst}{\mathrm{Syst}}
\newcommand{\Rep}{\mathrm{Rep}}
\newcommand{\QD}{\mathrm{QD}}
\newcommand{\RH}{\mathbf{M}}
\newcommand{\slr}{\mathfrak{sl}_2(\mathbb{R})}
\newcommand{\Belt}{\mathrm{Belt}}
\theoremstyle{definition}
\newtheorem{definition}{Definition}
\newtheorem{remark}{Remark}
\newtheorem{claim}{Claim}
\begin{document}
\title{Derivative of the Riemann-Hilbert map}
\author{Vladimir Markovi\'c and Ognjen To\v{s}i\'c}
\address{\newline Mathematical Institute  \newline University of Oxford  \newline United Kingdom}
\email{ognjen.tosic@gmail.com}
\address{\newline All Souls College  \newline University of Oxford  \newline United Kingdom}
\email{markovic@maths.ox.ac.uk}

\today

\subjclass[2010]{Primary 34M03}

\let\johnny\thefootnote
\renewcommand{\thefootnote}{}

\footnotetext{ This work was supported by the \textsl{Simons Investigator Award}  409745 from the Simons Foundation}
\let\thefootnote\johnny

\begin{abstract}
Given a pair $(X,\nabla)$, consisting of a closed Riemann surface $X$ and a holomorphic connection $\nabla$ on the trivial principal bundle $X\times\SLC\to X$, the Riemann--Hilbert map sends $(X,\nabla)$ to its monodromy representation. We compute the derivative of this map, and provide a simple  description of  the locus where it is injective, recovering in the process several previously obtained results.
\end{abstract}
\maketitle

\section{Introduction}
By $\Sigma_g$ we denote a closed topological surface of genus $g$, and by $\Teich_g$ the Teichm\"uller space of marked closed Riemann surfaces of genus $g$. Given $X\in \Teich_g$, we let $\Omega^1(X)$ denote the space of Abelian differentials on $X$. By 
$\slo(\Omega^1(X))$ we denote the space of traceless $2\times 2$ matrices of holomorphic 1-forms on $X$.

Fix $A\in \slo(\Omega^1(X))$, and consider the trivial holomorphic principal bundle $\underline{\SLC}:=X\times\SLC\to X$ equipped with the connection $\nabla=d+A$. This is a flat holomorphic connection over a marked Riemann surface $X$  inducing the monodromy representation 
$$
\rho_A:\pi_1(\Sigma_g)\to \SLC.
$$ 
The space of $\slo$-systems over $\Sigma_g$  is defined as
$$
\Syst_g=\{(X,A): X\in \Teich_g, \,\, A\in  \slo(\Omega^1(X)), \,\, \rho_A\,\, \text{is irreducible}  \} \sslash \SLC
$$
where we identify pairs $(X,A)$ and $(X,B)$ if $A$ and $B$ are  conjugated by and element of $\SLC$.  The  Riemann--Hilbert map 
$$
\RH:\Syst_g\to\Rep_g
$$
is defined by $\RH(X,A)=\rho_A$. Here 
$$
\Rep_g=\{ \rho\in \text{Hom}\big(\pi_1(\Sigma_g) \to \SLC \big) : \rho \,\, \text{is irreducible} \}  \sslash \SLC
$$
is the character variety of irreducible representations of the surface group $\pi_1(\Sigma_g)$.

It is well known that $\RH$ is a holomorphic map between connected complex manifold of complex dimension $6g-6$ (see \cite{b-d} and \cite{c-d-h-l}).  Our main result is Theorem \ref{thm:main} below is  the explicit computation of the derivative $D\RH$.  But first, we state  its main corollary characterising the pairs $(X,A)$ where the derivative of $\RH$ is injective (and hence bijective), answering a well known question (see \cite{c-d-h-l}, and also \cite{ghys} for the broader picture).

 Denote by $\QD(X)$ the space of holomorphic quadratic differentials on $X$. Given $\alpha,\beta,\gamma\in \Omega^1(X)$, we define the subspace  $\QD(\alpha,\beta,\gamma)\leq\QD(X)$ by  
 $$
 \QD(\alpha,\beta,\gamma)=\{\alpha\varphi_1+\beta\varphi_2+\gamma\varphi_3: \varphi_1, \varphi_2,\varphi_3\in \Omega^1(X)\}.
 $$ 
\begin{theorem}\label{cor:inj}
    Let $(X, A)\in\Syst_g$, and write \[A=\begin{pmatrix}
        \alpha & \beta \\ \gamma & -\alpha
    \end{pmatrix}\]
    for $\alpha,\beta,\gamma\in \Omega^1(X)$. The derivative $D_{(X,A)}\RH$ is injective  if and only if 
    \begin{equation}\label{eq-span}
    \QD(\alpha,\beta,\gamma)=\QD(X).
    \end{equation}  
    \end{theorem}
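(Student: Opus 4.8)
The plan is to derive Theorem~\ref{cor:inj} from the explicit formula for $D\RH$ in Theorem~\ref{thm:main} by passing to the transpose map. Since $\RH$ is a map between complex manifolds of the same dimension $6g-6$, the derivative $D_{(X,A)}\RH$ is injective if and only if it is surjective, hence if and only if its transpose is injective, and it is the transpose that sees $\QD(X)$ naturally. I would first recall the identification of the target tangent space $T_\rho\Rep_g$ with the twisted de Rham cohomology $H^1(X,\mathrm{ad})$ of the adjoint flat bundle $(\underline{\slo},\nabla^{\mathrm{ad}})$, where $\nabla^{\mathrm{ad}}=d+\mathrm{ad}_A$ and the underlying holomorphic bundle is the trivial bundle $\underline{\slo}=X\times\slo$. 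This cohomology carries the Hodge filtration coming from the hypercohomology of $[\underline{\slo}\xrightarrow{\nabla^{\mathrm{ad}}}\underline{\slo}\otimes\Omega^1]$, whose graded pieces are $F^1=\slo(\Omega^1(X))/[A,\slo]$ and $H^1(X,\mathrm{ad})/F^1=\ker\!\big(H^1(\underline{\slo})\to H^1(\underline{\slo}\otimes\Omega^1)\big)$, each of dimension $3g-3$; irreducibility of $\rho_A$ is exactly what forces $f\mapsto[A,f]$ to be injective on $\slo$, pinning down these dimensions.

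Next I would read off from Theorem~\ref{thm:main} the block structure of $D\RH$ relative to the splitting of $T_{(X,A)}\Syst_g$ into connection variations $\dot A\in\slo(\Omega^1(X))/[A,\slo]$ and Teichm\"uller variations $\mu\in H^{0,1}(X,TX)=\Belt(X)/(\text{trivial})$. A pure connection variation $\dot A$ is a holomorphic $\nabla^{\mathrm{ad}}$-closed $(1,0)$-form, so it represents its own class in $F^1$ and maps to zero in $H^1(X,\mathrm{ad})/F^1$; a pure complex-structure variation $\mu$ contributes the class of the $(0,1)$-form $\mu A$, the contraction of the holomorphic form $A$ by the Beltrami differential $\mu$. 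Thus $D\RH$ is block upper-triangular, with the $\dot A$-block an isomorphism onto $F^1$ and the remaining block the map $P\colon H^{0,1}(X,TX)\to H^1(X,\mathrm{ad})/F^1$, $\mu\mapsto[\mu A]$. Consequently $D_{(X,A)}\RH$ is injective if and only if $P$ is injective.

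Finally I would compute the transpose $P^{\mathsf T}$ via Serre duality. Serre duality gives $H^{0,1}(X,TX)^*\cong H^0(X,K^2)=\QD(X)$, while self-duality of the adjoint bundle under the Killing form gives $\big(H^1(X,\mathrm{ad})/F^1\big)^*\cong F^1=\slo(\Omega^1(X))/[A,\slo]$. Unwinding the cup-product pairing, for $\omega\in\slo(\Omega^1(X))$ the pairing $\langle P\mu,\omega\rangle$ equals the natural pairing $\langle\mu,\mathrm{tr}(\omega A)\rangle$ between $\mu$ and the quadratic differential $\mathrm{tr}(\omega A)$, so that $P^{\mathsf T}(\omega)=\mathrm{tr}(\omega A)$. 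Writing $\omega=\left(\begin{smallmatrix}p&q\\ r&-p\end{smallmatrix}\right)$ one finds $\mathrm{tr}(\omega A)=2p\,\alpha+r\,\beta+q\,\gamma$, so as $p,q,r$ range over $\Omega^1(X)$ the image of $P^{\mathsf T}$ is exactly $\QD(\alpha,\beta,\gamma)$. Hence $P$ is injective if and only if $P^{\mathsf T}$ is surjective, i.e.\ if and only if $\QD(\alpha,\beta,\gamma)=\QD(X)$, which is the asserted criterion.

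I expect the main obstacle to lie not in this final trace computation, which is clean, but in setting up the two preceding steps rigorously: correctly identifying $D\RH$ with the twisted de Rham class $[\dot A+\mu A]$ (the content of Theorem~\ref{thm:main}, including the precise meaning of the complex-structure variation $\mu A$), and matching the Hodge filtration with the Serre-duality pairing so that the self-duality of $(\underline{\slo},\nabla^{\mathrm{ad}})$ and the identification $\big(H^1(X,\mathrm{ad})/F^1\big)^*\cong F^1$ hold with the correct normalisations.
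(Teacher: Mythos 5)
Your proof is correct, and it reaches Theorem \ref{cor:inj} by a genuinely different route from the paper's. The paper analyzes the kernel of $D\RH$ directly: by Theorem \ref{thm:main}, $D\RH(\mu,\dot A)=0$ exactly when $\dot A=d_A\dot F$ for a smooth $\dot F:X\to\slc$, which unravels into the system $\mu A=\bar\partial\dot F$, $\dot A=\partial\dot F+[A,\dot F]+\mu A$ (Lemma \ref{lemma-1}); Lemma \ref{lemma-3} then shows that kernel vectors with $\mu\sim 0$ are zero (this is the content of your claim that the connection block maps isomorphically onto $F^1$), Lemma \ref{lemma-2} shows that a given $\mu$ admits a kernel partner $\dot A$ if and only if $\mu\alpha,\mu\beta,\mu\gamma$ are $\bar\partial$-exact (this is your description of $\ker P$), and a final application of Serre duality converts the existence of such a $\mu$, nonzero in $T_X\Teich_g$, into the failure of (\ref{eq-span}). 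You never solve for the kernel: instead you dualize, using that $F^1$ is Lagrangian for the cup-product (Goldman) pairing, so that $\bigl(H^1/F^1\bigr)^*\cong F^1$, and then computing $P^{\mathsf T}(\omega)=\mathrm{tr}(\omega A)$, whose image is visibly $\QD(\alpha,\beta,\gamma)$. The two computations are dual to one another --- your pairing $\int_X\mu\,\mathrm{tr}(\omega A)$ is precisely the Serre-duality integral the paper invokes --- but your packaging is more structural: it exhibits the Hodge-filtration mechanism behind the theorem and produces $\QD(\alpha,\beta,\gamma)$ as an image rather than through annihilators, at the cost of importing more machinery (the identification of $H^1/F^1$ inside $H^1(X,\mathcal{O}_X)\otimes\slc$, and perfectness of the induced pairing, for which the count $\dim F^1=3g-3$, hence irreducibility, is genuinely needed in the ``only if'' direction); the paper's argument is more elementary and self-contained. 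Two points to tighten if you write this up: $T_{(X,A)}\Syst_g$ carries a canonical filtration (connection variations as a subspace, $T_X\Teich_g$ as quotient) but no canonical splitting, so the ``block upper-triangular'' statement should be phrased filtration-wise --- which is all your argument actually uses; and $\mu A$ is not $d_A$-closed, so ``the class of the $(0,1)$-form $\mu A$'' makes sense only in the quotient $H^1/F^1$ (equivalently, through the pairing against $F^1$, as in your computation), not as a de Rham class in its own right.
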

\begin{remark}  This theorem has a number of applications which we prove in the last section. In particular,  we show how it can be used to recover theorems by  Biswas-Dumitrescu \cite{b-d},  Calsamiglia-Deroin-Heu-Loray \cite{c-d-h-l}. In Proposition 3.1 in \cite{b-d},  Biswas-Dumitrescu give a sufficient condition on the pair $(X,A)$ for the derivative  $D_{(X,A)}\RH$ to be injective. This condition  can be viewed as a cohomological version of the equality (\ref{eq-span}). 
\end{remark}

Before stating Theorem \ref{thm:main} we  describe our parameterisation of the tangent spaces to  $\Syst_g$ and $\Rep_g$.

\subsection{Beltrami differentials and tamed matrix forms}

Let  $\Belt(X)$ denote the vector space of smooth Beltrami differentials on $X$. Two differentials $\mu,\nu\in \Belt(X)$ are equivalent if 
$$
\int\limits_{X} \mu\phi=\int\limits_{X} \nu\phi
$$
for every holomorphic quadratic differential $\phi\in \QD(X)$. This is a linear equivalence relation, that we denote $\sim$, and the quotient vector space $\Belt(X)/\sim$ is naturally isomorphic to the tangent space of the Teichm\"uller space $T_X\Teich_g$.

Next, we define the notion of a tame matrix valued form. 
\begin{definition} Let $A\in \slo(\Omega^1(X))$, and $\mu\in \Belt(X)$. We say that $\dot{A}$ is $(\mu,A)$-tamed form if
$\dot{A}$  a closed $\slc$-valued 1-form whose $(0,1)$ part satisfies the equality $\dot{A}^{0,1}=\mu A$.
\end{definition}
The following lemma is elementary  and its proof is left to the reader.
\begin{lemma}\label{lemma-elem} Fix $A \in \slo(\Omega^1(X))$. Let  $\mu,\nu \in \Belt(X)$, and suppose $\dot{A}$ and $\dot{B}$ are 
$(\mu,A)$ and $(\nu,A)$ tamed respectively. If $\mu\sim \nu$ then there exists a smooth function $T:X\to \slc$ such that
$$
\dot{A}-\dot{B}-dT \in \slo(\Omega^1(X)).
$$
\end{lemma}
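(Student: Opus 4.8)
The plan is to reduce the statement to a $\bar\partial$-equation on $X$ and to match its solvability obstruction with the equivalence $\mu\sim\nu$. Set $\omega=\dot A-\dot B$. Since $\dot A$ and $\dot B$ are closed, $\omega$ is a closed $\slc$-valued $1$-form, and by the tameness hypothesis its $(0,1)$-part is
\[
\omega^{0,1}=\dot A^{0,1}-\dot B^{0,1}=\mu A-\nu A=(\mu-\nu)A.
\]
I want a smooth $T:X\to\slc$ with $\eta:=\omega-dT\in\slo(\Omega^1(X))$. The key simplification is that on a Riemann surface a $(1,0)$-form is holomorphic as soon as it is closed: a $(1,0)$-form has vanishing $\partial$ for type reasons, so closedness forces its $\bar\partial$ to vanish. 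Since $\eta$ is closed (a closed form minus an exact one), it therefore suffices to arrange that $\eta$ has vanishing $(0,1)$-part and is traceless; it will then be a holomorphic traceless matrix $1$-form.

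As $(dT)^{0,1}=\bar\partial T$, the condition $\eta^{0,1}=0$ is precisely $\bar\partial T=(\mu-\nu)A$, to be solved entrywise. Writing $A=\begin{pmatrix}\alpha&\beta\\\gamma&-\alpha\end{pmatrix}$, I would solve the three scalar equations $\bar\partial T_{11}=(\mu-\nu)\alpha$, $\bar\partial T_{12}=(\mu-\nu)\beta$, $\bar\partial T_{21}=(\mu-\nu)\gamma$, and then set $T_{22}=-T_{11}$, which automatically solves the remaining entry and makes $T$ take values in $\slc$. On the closed surface $X$, Hodge theory (equivalently Serre duality) tells us that a scalar equation $\bar\partial u=g$ with $g$ a $(0,1)$-form is solvable exactly when $\int_X g\wedge\phi=0$ for every holomorphic $1$-form $\phi\in\Omega^1(X)$.

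The main point is to identify this obstruction with $\mu\sim\nu$. For $g=(\mu-\nu)\,A_{ij}$ and $\phi\in\Omega^1(X)$, the product $A_{ij}\phi$ is a holomorphic quadratic differential, and up to a fixed nonzero constant $\int_X (\mu-\nu)A_{ij}\wedge\phi$ equals the Teichm\"uller pairing $\int_X(\mu-\nu)\psi$ with $\psi=A_{ij}\phi\in\QD(X)$. Since $\mu\sim\nu$, this pairing vanishes for every $\phi$, so each scalar equation is solvable and we obtain the desired $T$. With $T$ traceless, $\eta=\omega-dT$ is closed, of type $(1,0)$, and traceless, hence $\eta\in\slo(\Omega^1(X))$, which is the assertion.

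The step requiring the most care is the last identification: keeping track of types and of the sign and normalising constant relating the wedge $\int_X g\wedge\phi$ appearing in the $\bar\partial$-solvability criterion to the pairing $\int_X(\mu-\nu)\psi$ that defines $\sim$, so that the condition $\psi=A_{ij}\phi\in\QD(X)$ is exactly what is needed. Everything else is a routine application of Hodge theory on the compact surface $X$.
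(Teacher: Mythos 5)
Your proof is correct: reducing to the entrywise $\bar\partial$-equation $\bar\partial T=(\mu-\nu)A$, invoking the Dolbeault/Serre-duality solvability criterion, identifying the obstruction $\int_X(\mu-\nu)A_{ij}\wedge\phi$ (up to sign) with the Teichm\"uller pairing against $A_{ij}\phi\in\QD(X)$, and using closedness to promote the resulting $(1,0)$-form to a holomorphic one is exactly the expected argument. The paper itself declares this lemma elementary and leaves the proof to the reader, so there is nothing to compare against; your write-up, including the tracelessness bookkeeping $T_{22}=-T_{11}$, fills that gap correctly.
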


\subsection{Parametrisation of the tangent space to $\Syst_g$}

Fix $(X,A)\in \Syst_g$,  and define the vector space
$$
Z(X,A)=\{ (\mu,\dot{A}) : \mu \in \Belt(X), \,\, \dot{A}\,\,\text{is $(\mu,A)$-tamed} \},
$$
That is, $Z(X,A)$ is a set of pairs $(\mu, \dot{A})$ where $\mu$ is a smooth Beltrami form on $X$, and $\dot{A}$  a closed $\slc$-valued 1-form satisfying the equality $\dot{A}^{0,1}=\mu A$. We define the linear equivalence relation on $Z(X,A)$ by letting
$(\mu,\dot{A}) \sim (\nu,\dot{B})$ if the following two conditions are satisfied
\begin{enumerate} 
\item $\mu\sim \nu$,
\vskip .1cm
\item $\dot{A}^{1,0}-\dot{B}^{1,0}-\partial{T} \in [A,\slc]$, where  $T:X\to \slc$ is the function from Lemma \ref{lemma-elem}.
\end{enumerate}
\begin{remark} Whenever $\mu\sim \nu$, we have $\dot{A}^{1,0}-\dot{B}^{1,0}-\partial{T}\in \slo(\Omega^1(X))$. For the pairs $(\mu,\dot{A})$ and $(\nu,\dot{B})$ to be equivalent we require this element to live in $[A,\slc]\leq\slo(\Omega^1(X))$.
\end{remark}

It will be shown in Lemma \ref{lm:tangent-syst} that the vector space $Z(X,A)/\sim$ is naturally isomorphic to $T_{(X,A)}\Syst_g$.

\subsection{Derivative of the monodromy map $\RH$}

The tangent space $T_\rho \Rep_g$ is well-known to coincide with $H^1(\mathrm{Ad}_\rho)$, where $\mathrm{Ad}_\rho$ denotes the $\pi_1(\Sigma_g)$-module with underlying vector space $\slc$ and the action $(\gamma, T)\to\rho(\gamma)T\rho(\gamma)^{-1}$. Since $\mathrm{Ad}_\rho$ is precisely the monodromy of the flat connection 
$$
d_A:=d+\mathrm{ad}(A)
$$ 
on the bundle $\underline{\slc}:=X\times\slc\to X$, there is a de Rham isomorphism 
\begin{equation}\label{eq-rham}
\iota:H^1(X, \mathcal{E})\to H^1(\mathrm{Ad}_\rho). 
\end{equation}
Here $\mathcal{E}$ denotes the sheaf of $d_A$-flat sections of the bundle $\underline{\slc}$.
We are now ready to state our main result.
\begin{theorem}\label{thm:main} Let $(X,A)\in\Syst_g$ and $(\mu, \dot{A})\in Z(X,A)$. Then $\dot{A}$ is a $d_A$-closed ${\slc}$-valued 1-form, and hence defines a cohomology class $\chi\in H^1(X,\mathcal{E})$. Moreover, $D\RH\big([(\mu,\dot{A})]\big)=-\iota(\chi)$, where $[(\mu,\dot{A})]\in Z(X,A)/\sim $ denotes the corresponding tangent vector under the identification $T_{(X,A)}\Syst_g\approx Z(X,A)/\sim$.
\end{theorem}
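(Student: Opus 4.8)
The plan is to establish the two assertions of the theorem in turn: the $d_A$-closedness of $\dot A$, which is a pointwise computation on $X$, and then the holonomy variation formula $D\RH=-\iota(\chi)$, which is the substantive part. For the first assertion, note that since $(\mu,\dot A)\in Z(X,A)$ the form $\dot A$ is closed, so $d_A\dot A=d\dot A+[A\wedge\dot A]=[A\wedge\dot A]$. I would split $\dot A=\dot A^{1,0}+\dot A^{0,1}$ and observe that $[A\wedge\dot A^{1,0}]$ is of type $(2,0)$, hence vanishes on the Riemann surface $X$, while $[A\wedge\dot A^{0,1}]=[A\wedge\mu A]$ vanishes because, writing $A=a\,dz$ in a local holomorphic coordinate, one has $[A\wedge\mu A]=\mu\,[a,a]\,dz\wedge d\bar z=0$. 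This is precisely where the tameness condition $\dot A^{0,1}=\mu A$ is used: proportionality of $\dot A^{0,1}$ to $A$ turns the relevant bracket into a self-bracket, which vanishes. Hence $d_A\dot A=0$, and since the de Rham complex of $(\underline{\slc},d_A)$ computes $H^1(X,\mathcal E)$, the form $\dot A$ represents a class $\chi$.

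For the second assertion I would first move the entire computation onto a fixed smooth surface $\Sigma_g$. Choose a smooth path $(X_t,A_t)\in\Syst_g$ representing the tangent vector $[(\mu,\dot A)]$, and trivialise the varying complex structures by quasiconformal maps $f_t\colon\Sigma_g\to X_t$ whose Beltrami coefficients are $t\mu+o(t)$. Because $\RH$ records only the flat connection and is insensitive to the complex structure, the monodromy $\rho_{A_t}$ equals that of the smooth flat connection $d+f_t^{*}A_t$ on the fixed bundle over $\Sigma_g$. The key conceptual point is that the complex-structure deformation $\mu$ re-enters the computation precisely through the $(0,1)$-part of $\tfrac{d}{dt}\big|_{0}f_t^{*}A_t=\dot A$, namely $\dot A^{0,1}=\mu A$; this is the geometric reason a tamed form must package both the deformation of $A$ and of the complex structure. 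Thus $D\RH\big([(\mu,\dot A)]\big)$ is identified with the derivative of the holonomy of $d+A$ in the direction of the $d_A$-closed form $\dot A$.

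I would then carry out the classical holonomy variation on the universal cover $\wt X$. Let $\Phi_t$ solve $d\Phi_t=-A_t\Phi_t$ with $\Phi_t(\wt x_0)=I$, so that $\rho_t(\gamma)=\Phi_t(\gamma\cdot\wt x_0)$. Differentiating at $t=0$ and applying variation of parameters, $\dot\Phi=\Phi_0\Psi$, yields $d\Psi=-\mathrm{Ad}(\Phi_0^{-1})\dot A$, hence $\Psi(\wt x)=-\int_{\wt x_0}^{\wt x}\mathrm{Ad}(\Phi_0^{-1})\dot A$. The associated $\mathrm{Ad}_\rho$-valued group cocycle $\gamma\mapsto\dot\rho(\gamma)\rho(\gamma)^{-1}=\mathrm{Ad}(\rho(\gamma))\,\Psi(\gamma\cdot\wt x_0)$ is, by construction, the image under the de Rham isomorphism \eqref{eq-rham} of the class $\chi=[\dot A]$, up to an overall sign; matching the conventions then produces exactly $-\iota(\chi)$.

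I expect the genuine difficulties to be twofold. First, pinning down the overall sign and verifying that the explicit integral cocycle of the previous step is literally $\iota(\chi)$ under the chosen normalisation of \eqref{eq-rham}; this is bookkeeping, but it is where the stated factor $-1$ must be justified rather than merely asserted. Second, making the reduction rigorous: one must check that $f_t^{*}A_t$ is a well-defined smooth family with $\tfrac{d}{dt}\big|_0 f_t^{*}A_t=\dot A$ on the fixed bundle, and that the resulting class $-\iota(\chi)$ depends only on $[(\mu,\dot A)]\in Z(X,A)/\sim$ and not on the chosen representative or path. This last consistency is controlled by Lemma \ref{lemma-elem} together with condition (2) in the definition of $\sim$, which force any two equivalent representatives to yield $d_A$-cohomologous forms; it is also exactly what makes the assignment compatible with the identification $T_{(X,A)}\Syst_g\approx Z(X,A)/\sim$ of Lemma \ref{lm:tangent-syst}.
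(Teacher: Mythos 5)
Your proposal is, in substance, the paper's own proof: the paper likewise takes the path from Lemma \ref{lm:tangent-syst} with $\left.\frac{d}{dt}\right|_{t=0}(f^{t\mu})^*A_t=\dot{A}$, trivialises the flat bundle on the universal cover by a fundamental solution (Lemma \ref{lm:trivialize}: $dF+AF=0$), expands $F_t=(\mathrm{id}+t\dot{F}+o(t))F$ to get $d_A\dot{F}=-\dot{A}$ --- your variation-of-parameters quantity $\Psi=\Phi_0^{-1}\dot{\Phi}$ is exactly the paper's $B=F^{-1}\dot{F}F$ --- and then matches the resulting group cocycle against the explicit de Rham map of \S\ref{subsec:derham} to get $[\dot{\rho}]=-\iota[\dot{A}]$; your pointwise verification of $d_A$-closedness (which the paper leaves implicit) is correct and a nice addition. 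One concrete slip sits exactly in the part you defer as ``bookkeeping'': with $d\Phi_t=-A_t\Phi_t$ and $\Phi_t(\tilde{x}_0)=\mathrm{id}$, uniqueness of solutions gives $\Phi_t(\gamma x)=\Phi_t(x)\Phi_t(\gamma\tilde{x}_0)$, hence $\Phi_t(\gamma_1\gamma_2\tilde{x}_0)=\Phi_t(\gamma_2\tilde{x}_0)\Phi_t(\gamma_1\tilde{x}_0)$, so your $\rho_t(\gamma):=\Phi_t(\gamma\tilde{x}_0)$ is an \emph{anti}-homomorphism; the monodromy is $\Phi_t(\gamma\tilde{x}_0)^{-1}$ (the paper's convention $F_t(\gamma x)=F_t(x)\rho_t(\gamma)^{-1}$), and fixing this is precisely what yields the paper's identity $\dot{\rho}(\gamma)=\rho(\gamma)B(\gamma^{-1}x)\rho(\gamma)^{-1}-B(x)$ and pins down the sign $-\iota(\chi)$, so this step is not optional polish but the actual content of the final comparison. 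Finally, the well-definedness check you propose at the end is both unnecessary and harder than you suggest: once the formula is established for an arbitrary representative $(\mu,\dot{A})$, independence of the representative is automatic because $D\RH$ is already well defined on $T_{(X,A)}\Syst_g\approx Z(X,A)/\sim$ by Lemma \ref{lm:tangent-syst}; a direct argument from Lemma \ref{lemma-elem} and condition (2) alone would require showing that $dT$ (equivalently $[A,T]$) is $d_A$-exact, which is not immediate.
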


\subsection{Organisation}

Theorem \ref{cor:inj} is proved in Section 2. In Section 3 we describe the tangent space to $\Syst_g$ and prove that  $T_{(X,A)}\Syst_g$ is isomorphic to $Z(X,A)/\sim$. In Section 4 we recall the description of the tangent space of $\Rep_g$ and construct the 
de Rham isomorphism (\ref{eq-rham}). In Section 5 we prove Theorem \ref{thm:main}. Applications of Theorem  \ref{cor:inj} are derived in Section 6.

\section{The kernel of  $D\RH$} 

Fix $(X,A)\in \Syst_g$, and consider a  tangent vector $(\mu,\dot{A})\in Z(X,A)$. The following three  lemmas  easily follow from Theorem \ref{thm:main}, and the isomorphism $T_{(X,A)}\Syst_g\approx Z(X,A)/\sim$. We then use these lemmas to prove Theorem \ref{cor:inj}.

\begin{lemma}\label{lemma-1} Let $(\mu,\dot{A})\in Z(X,A)$. The derivative $D\RH(\mu, \dot{A})$ vanishes if and only if there exists a smooth function
$\dot{F}:X\to\slc$  such that
\begin{equation}\label{eq-1}
\mu A=\bar{\partial}\dot{F}\,\,\,\,\,\text{ and }\,\,\,\,\,\dot{A}=\partial\dot{F}+[A,\dot{F}]+\mu A.
\end{equation}
\end{lemma}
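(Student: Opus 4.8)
The plan is to deduce the lemma directly from Theorem \ref{thm:main} by unwinding what it means for the associated cohomology class to vanish. By Theorem \ref{thm:main}, $D\RH\big([(\mu,\dot{A})]\big)=-\iota(\chi)$, where $\chi\in H^1(X,\mathcal{E})$ is the class of the $d_A$-closed form $\dot{A}$ and $\iota$ is the de Rham isomorphism (\ref{eq-rham}). Since $\iota$ is an isomorphism, $D\RH(\mu,\dot{A})=0$ if and only if $\chi=0$, that is, if and only if $\dot{A}$ is $d_A$-exact. The entire content of the lemma is therefore the translation of $d_A$-exactness into the explicit pair of equations (\ref{eq-1}).

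To carry this out I would first note that $H^1(X,\mathcal{E})$ is computed by the de Rham complex of smooth $\slc$-valued forms equipped with the differential $d_A=d+\mathrm{ad}(A)$; this is legitimate because on a Riemann surface the connection $d_A$ is automatically flat, as $A$ is a matrix of $(1,0)$-forms and hence $\partial A=0$ and $A\wedge A=0$ for degree reasons. Thus $\chi=0$ is equivalent to the existence of a global smooth $\dot{F}:X\to\slc$ with $\dot{A}=d_A\dot{F}=d\dot{F}+[A,\dot{F}]$.

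The final step is to decompose this identity by bidegree. Since $A$ has type $(1,0)$, the bracket $[A,\dot{F}]$ also has type $(1,0)$, so comparing the $(0,1)$ and $(1,0)$ parts of $\dot{A}=\partial\dot{F}+\bar{\partial}\dot{F}+[A,\dot{F}]$ yields $\dot{A}^{0,1}=\bar{\partial}\dot{F}$ and $\dot{A}^{1,0}=\partial\dot{F}+[A,\dot{F}]$. Using the tameness condition $\dot{A}^{0,1}=\mu A$, the first identity becomes $\mu A=\bar{\partial}\dot{F}$, and restoring $\dot{A}^{0,1}=\mu A$ in the second gives $\dot{A}=\partial\dot{F}+[A,\dot{F}]+\mu A$, which is precisely (\ref{eq-1}). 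The converse is the same computation read backwards: substituting $\mu A=\bar{\partial}\dot{F}$ into the second equation of (\ref{eq-1}) recovers $\dot{A}=d_A\dot{F}$, so $\chi=0$ and $D\RH$ vanishes.

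Granting Theorem \ref{thm:main} and the fact that $\iota$ is an isomorphism, the only step requiring any care is the identification of $\chi=0$ with $d_A$-exactness by a global smooth potential, i.e. the de Rham theorem for the flat bundle $(\underline{\slc},d_A)$, together with the observation that the tameness hypothesis is exactly what pins down the $(0,1)$ component and so is consumed rather than imposed as an extra constraint. I expect this bidegree bookkeeping to be the only genuine obstacle, and a mild one.
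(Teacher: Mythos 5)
Your proposal is correct and follows essentially the same route as the paper: invoke Theorem \ref{thm:main} to reduce vanishing of $D\RH$ to $d_A$-exactness of $\dot{A}$, then split $\dot{A}=d\dot{F}+[A,\dot{F}]$ into $(1,0)$ and $(0,1)$ parts and use $\dot{A}^{0,1}=\mu A$ to obtain (\ref{eq-1}), with the converse being the same computation reversed. The extra details you supply (flatness of $d_A$ for degree reasons, the soft resolution computing $H^1(X,\mathcal{E})$) are correct and simply make explicit what the paper leaves implicit.
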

\begin{proof}
By  Theorem \ref{thm:main} the derivative $D\RH(\mu, \dot{A})$ vanishes if and only if $\chi=[\dot{A}]\in H^1(X,\mathcal{E})$ vanishes, that is,  if $\dot{A}$ is $d_A$-exact. This means that $\dot{A}=d_A\dot{F}=d\dot{F}+[A, \dot{F}]$, for some smooth function $\dot{F}:X\to\slc$. Splitting this equality into $(0,1)$ and $(1,0)$ parts, and using that $\dot{A}^{0,1}=\mu A$, we derive (\ref{eq-1}). 

\end{proof}

 \begin{lemma}\label{lemma-2} Suppose $\mu$ is a Beltrami differential on $X$. Then there exists $\dot{A}$ such that $D\RH(\mu, \dot{A})=0$ if and only if the 1-forms $\mu\alpha,\mu\beta,\mu\gamma$ are all $\bar{\partial}$-exact, where   
 \[A=\begin{pmatrix}
        \alpha & \beta \\ \gamma & -\alpha
    \end{pmatrix}\]
\end{lemma}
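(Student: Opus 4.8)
The plan is to read Lemma~\ref{lemma-2} off directly from Lemma~\ref{lemma-1}. Observe that in the system~(\ref{eq-1}) the second equation merely \emph{defines} a candidate form $\dot A$ once $\dot F$ is known, so the only genuine constraint is the solvability of the first equation $\mu A=\bar{\partial}\dot F$ for a smooth traceless $\dot F\colon X\to\slc$. Thus I would first reduce the statement to the claim: there exists $\dot A$ with $(\mu,\dot A)\in Z(X,A)$ and $D\RH(\mu,\dot A)=0$ if and only if the matrix equation $\mu A=\bar{\partial}\dot F$ admits a smooth $\slc$-valued solution $\dot F$.

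Next I would decouple this matrix equation into scalar ones. Writing $\dot F=\begin{pmatrix} f & g \\ h & -f\end{pmatrix}$ with $f,g,h\colon X\to\mathbb{C}$ smooth, and using that $\bar{\partial}$ acts entrywise, the equation $\mu A=\bar{\partial}\dot F$ is equivalent to the three scalar equations $\bar{\partial}f=\mu\alpha$, $\bar{\partial}g=\mu\beta$, $\bar{\partial}h=\mu\gamma$; the $(2,2)$-entry is automatically the negative of the $(1,1)$-entry, consistent with tracelessness. Hence such a $\dot F$ exists precisely when each of $\mu\alpha,\mu\beta,\mu\gamma$ is $\bar{\partial}$-exact, which is the asserted condition. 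In the forward direction I simply invoke Lemma~\ref{lemma-1} to produce $\dot F$ and read off its entries; in the backward direction I assemble $\dot F$ from the three primitives $f,g,h$ and set $\dot A=\partial\dot F+[A,\dot F]+\mu A$.

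The one point requiring care, and the main obstacle, is checking that this $\dot A$ genuinely lies in $Z(X,A)$, so that Lemma~\ref{lemma-1} is applicable. The condition $\dot A^{0,1}=\mu A$ is immediate, since $\partial\dot F$ and $[A,\dot F]$ are of type $(1,0)$. For closedness, note that $\dot A=d\dot F+[A,\dot F]=d_A\dot F$, whence $d\dot A=-[A\wedge d\dot F]$ because $d^2=0$ and $dA=0$ (the entries of $A$ are holomorphic, hence closed, $1$-forms). Here the one-dimensionality of $X$ enters decisively: the $(1,0)$-part of the cross term wedges two $(1,0)$-forms and vanishes, while the $(0,1)$-part is $[A\wedge\mu A]$, which also vanishes since $A\wedge A=0$ for a matrix of $(1,0)$-forms on a curve. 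Thus $d\dot A=0$, the form $\dot A$ is $(\mu,A)$-tamed, and Lemma~\ref{lemma-1} yields $D\RH(\mu,\dot A)=0$, completing the equivalence.
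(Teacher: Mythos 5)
Your proof is correct and follows essentially the same route as the paper: the forward direction reads off the entries of the $\dot F$ supplied by Lemma~\ref{lemma-1}, the converse assembles $\dot F$ from the three scalar $\bar\partial$-primitives and sets $\dot A=\mu A+\partial\dot F+[A,\dot F]$ exactly as the paper does, and your closedness computation is precisely the verification the paper leaves to the reader (``one can now verify''). One cosmetic point: $[A\wedge\mu A]$ vanishes not literally because $A\wedge A=0$ (multiplication by $\mu$ changes the bidegree, so it cannot be factored out of the wedge), but because in a local coordinate, writing $A=a\,dz$ and $\mu=\mu_0\,d\bar z/dz$, one gets $[A\wedge\mu A]=\mu_0\left(a^2\,dz\wedge d\bar z+a^2\,d\bar z\wedge dz\right)=\mu_0[a,a]\,dz\wedge d\bar z=0$, i.e.\ the cancellation comes from the matrix coefficient commuting with itself.
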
 
\begin{proof} If such $\dot{A}$ exists then by Lemma \ref{lemma-1} there exists  $\dot{F}:X\to\slc$ satisfying the system (\ref{eq-1}). In particular, the equality $\mu A=\bar{\partial}\dot{F}$ holds implying that the 1-forms $\mu\alpha,\mu\beta,\mu\gamma$ are all $\bar{\partial}$-exact. 

Suppose now that $\mu\alpha,\mu\beta,\mu\gamma$ are all $\bar{\partial}$-exact. Then we can construct  $\dot{F}:X\to\slc$
such that $\mu A=\bar{\partial}\dot{F}$. Set $\dot{A}=\mu A+\partial\dot{F}+[A,\dot{F}]$. One can now verify that $(\mu,\dot{A})$ solves the system (\ref{eq-1}). \end{proof}

\begin{lemma}\label{lemma-3} Suppose $\mu$ is zero in $T_X\Teich_g$. Then $D\RH(\mu, \dot{A})=0$ if and only if the pair $(\mu,\dot{A})$ represents the zero vector in $T_{(X,A)}\Syst_g$.
\end{lemma}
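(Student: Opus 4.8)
The plan is to exploit the hypothesis $\mu\sim 0$ to normalize the representative: I would first replace $(\mu,\dot A)$ by an equivalent pair of the form $(0,\dot A')$ with $\dot A'$ \emph{holomorphic}, and then show that for such a pair both the kernel condition and the triviality condition collapse to the single statement $\dot A'\in[A,\slc]$. The easy direction is immediate and I would dispatch it first: by Theorem \ref{thm:main} the map $D\RH$ factors through $Z(X,A)/\!\sim\;\approx T_{(X,A)}\Syst_g$ and is linear, so it annihilates the zero class, giving $D\RH(\mu,\dot A)=0$ whenever $(\mu,\dot A)$ represents $0$.

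For the converse, suppose $D\RH(\mu,\dot A)=0$ with $\mu\sim 0$. The first step is the normalization. Since $\mu\sim 0$, I can apply Lemma \ref{lemma-elem} with $\nu=0$ and $\dot B=0$ to obtain a smooth $\dot F\colon X\to\slc$ with $\dot A-d\dot F\in\slo(\Omega^1(X))$. Setting $\dot A'=\dot A-d\dot F$, this form is closed and satisfies $(\dot A')^{0,1}=\mu A-\bar\partial\dot F=0$, so $\dot A'$ is a holomorphic element of $\slo(\Omega^1(X))$ and $(0,\dot A')\in Z(X,A)$. Taking $T=\dot F$ in the definition of $\sim$ gives $\dot A-\dot A'-dT=0$ and $\dot A^{1,0}-(\dot A')^{1,0}-\partial\dot F=0\in[A,\slc]$, so that $(\mu,\dot A)\sim(0,\dot A')$. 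In particular these represent the same tangent vector, and by well-definedness of $D\RH$ on the quotient one has $D\RH(0,\dot A')=D\RH(\mu,\dot A)=0$.

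It then remains to treat the normalized pair, where the argument is purely formal. Applying Lemma \ref{lemma-1} to $(0,\dot A')$ produces $\dot G\colon X\to\slc$ with $\bar\partial\dot G=0$ and $\dot A'=\partial\dot G+[A,\dot G]$. Because $X$ is a closed Riemann surface, $\bar\partial\dot G=0$ forces $\dot G$ to be a constant matrix $S\in\slc$, whence $\dot A'=[A,S]\in[A,\slc]$. Conversely, this membership together with the choice $T=0$ (legitimate since $\dot A'$ is already holomorphic) verifies both conditions defining $(0,\dot A')\sim(0,0)$. Chaining the equivalences yields $(\mu,\dot A)\sim(0,\dot A')\sim(0,0)$, i.e. $(\mu,\dot A)$ represents the zero vector, completing the converse.

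I expect the main obstacle to be the normalization step rather than the formal conclusion. It is precisely the condition $\mu\sim 0$ that places $\mu A$ in the image of $\bar\partial$ (equivalently, that lets Lemma \ref{lemma-elem} apply with $\nu=0$), so that $\dot A$ can be corrected to a holomorphic form without leaving its $\sim$-class; without this hypothesis the correction need not exist and the pair genuinely fails to be trivial. Once $\mu$ has been removed, the remaining input is only the elementary fact that a $\bar\partial$-closed function on the compact surface $X$ is constant, which is what simultaneously collapses the kernel condition and the triviality condition to $\dot A'\in[A,\slc]$. The one point requiring care is the bookkeeping of the two defining conditions of $\sim$ under the correct choice of $T$, which I have indicated above.
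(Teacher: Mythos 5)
Your proof is correct and takes essentially the same route as the paper: reduce to the case $\mu=0$, apply Lemma \ref{lemma-1}, and use that a holomorphic map $X\to\slc$ on a closed surface is constant to conclude $\dot{A}'\in[A,\slc]$, hence triviality in $T_{(X,A)}\Syst_g$. Your explicit normalization via Lemma \ref{lemma-elem} (with $\nu=0$, $\dot{B}=0$) simply spells out the step the paper compresses into ``we may assume $\mu=0$,'' and your separate treatment of the easy direction by linearity matches what the paper leaves implicit.
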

\begin{proof} Suppose that $D\RH(\mu, \dot{A})=0$. We need to prove that  $(\mu,\dot{A})$ represents the zero vector in $T_{(X,A)}\Syst_g$. Since $\mu$ is zero in $T_X\Teich_g$, we may assume $\mu=0$. Let $\dot{F}:X\to\slc$ be the function from Lemma \ref{lemma-1}. Then $\bar{\partial}\dot{F}=0$, and hence $\dot{F}:X\to\slc$ is holomorphic. Therefore $\dot{F}$ is constant, and by the second equality in (\ref{eq-1}) we obtain $\dot{A}=\dot{A}^{1,0}=[A, \dot{F}]$. But then 
 $\dot{A}\in [A,\slc]$,  and so $(0,\dot{A})$ vanishes in $T_{(X,A)}\Syst_g$.  \end{proof}

\subsection{Proof of Theorem \ref{cor:inj}}  
We are ready to complete the proof of Theorem \ref{cor:inj}. Let $\mu$ denote a Beltrami differential on $X$. Then by Lemma \ref{lemma-2} and Lemma \ref{lemma-3} there exists $\dot{A}$ such that $(\mu,\dot{A})$ is a non-zero vector with 
$D\RH(\mu, \dot{A})=0$ if and only if $\mu$ is not equal to zero in $T_X\Teich_g$, and  the 1-forms $\mu\alpha,\mu\beta,\mu\gamma$ are all $\bar{\partial}$-exact.  By Serre duality, the last condition is equivalent to 
$$
\int\limits_{X} \mu\phi=0
$$
for every $\phi\in \alpha \Omega^1(X)+\beta \Omega^1(X)+\gamma \Omega^1(X)\leq \QD(X)$. Thus, $D\RH$ is injective if and only if 
the equality  (\ref{eq-span}) holds.

\section{The space of $\mathfrak{sl}_2$-systems}
Recall that 
\begin{align*}
    \Syst_g=\{(X, A):X\in\Teich_g, A\in\mathfrak{sl}_2(\Omega^1(X)), \rho_A\text{ is irreducible}\}\sslash\SLC.
\end{align*}
Note that the quotient here is by the conjugation action of $\SLC$, in the sense of geometric invariant theory. Since we have restricted to the irreducible locus, this coincides with the quotient in the sense of topology. It is standard that $\Syst_g$ is a smooth complex manifold.
\par Our aim in this section is to describe the tangent space $T_{(X,A)}\Syst_g=Z(X,A)/\sim$, as described in Section 1. Given a smooth Beltrami form $\mu\in\mathrm{Belt}(X)$ with $\norm{\mu}_{L^\infty(X)}<1$, denote by $f^\mu:X\to X^\mu$ the smooth map to a Riemann surface $X^\mu$ such that 
\begin{align*}
    \bar{\partial}f^\mu=\mu\partial f^\mu.
\end{align*} 
\begin{lemma}\label{lm:tangent-syst}
    Let $(X,A)\in\Syst_g$ and $(\mu,\dot{A})\in Z(X,A)$. Then there exists a path of pairs $(X^{t\mu}, A_t)$ based at $(X, A)$, such that 
    \begin{align}\label{eq:path-exists}
        \left.\frac{d}{dt}\right|_{t=0} \left(f^{t\mu}\right)^*A_t=\dot{A}.
    \end{align}
Moreover, the map $\tau:Z(X,A)\to T_{(X, A)}\Syst_g$, defined by taking the tangent vector to this path at $t=0$, is surjective and has kernel  $B(X,A)$ consisting of pairs $(\mu,\dot{A})\in Z(X,A)$ such that 
\begin{itemize} 
\item  $\mu$ vanishes in $T_X\Teich_g$, and
\vskip .1cm
\item  $\dot{A}-dT\in[A,\slc]$ for some function $T:X\to\slc$.
\end{itemize}
\end{lemma}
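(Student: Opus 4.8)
The plan is to realize $\Syst_g$ as a quotient and to read off both the surjectivity of $\tau$ and its kernel from the geometry of that quotient. Let $\mathcal{M}$ be the ``pre-quotient'' space of pairs $(J,A)$, where $J$ is a complex structure on $\Sigma_g$ and $A$ is a $J$-holomorphic traceless matrix $1$-form with $\rho_A$ irreducible, so that $\Syst_g=\mathcal{M}/\mathcal{G}$ with $\mathcal{G}=\mathrm{Diff}_0(\Sigma_g)\ltimes\SLC$ acting by pullback of $(J,A)$ and by constant conjugation of $A$. On the irreducible locus the stabilizers are finite (the subgroup of $\SLC$ fixing an irreducible $A$ is the center $\pm I$), the action is proper, and the quotient is the smooth manifold $\Syst_g$; I take this for granted. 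Under the framing $(f^{t\mu})^{*}$ I will identify $Z(X,A)$ with $T_{(X,A)}\mathcal{M}$, so that $\tau$ becomes the derivative at $(X,A)$ of the quotient map $q:\mathcal{M}\to\Syst_g$. Granting this, surjectivity of $\tau$ is surjectivity of the submersion $Dq$, and $\ker\tau=T_{(X,A)}(\mathcal{G}\text{-orbit})$. The proof then splits into (a) the analytic identification $Z(X,A)\cong T_{(X,A)}\mathcal{M}$, i.e.\ the existence of the path (\ref{eq:path-exists}), and (b) the computation of the orbit tangent space.

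For (a) I would fix the Beltrami line $t\mapsto X^{t\mu}$ and consider
\[
V_t=\{\,\omega:\ \omega\ \text{a closed}\ \slc\text{-valued }1\text{-form on }X,\ \omega^{0,1}=t\mu\,\omega^{1,0}\,\},
\]
which is exactly the pullback under $(f^{t\mu})^{*}$ of $\slo(\Omega^1(X^{t\mu}))$, since on a Riemann surface a closed form of type $(1,0)$ is holomorphic. Hodge theory for the smoothly varying structures $X^{t\mu}$ shows that $\bigsqcup_t V_t$ is a smooth vector bundle of constant rank $3g$ over the $t$-interval; in particular its total space is a manifold and the projection to the $t$-line is a submersion. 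Differentiating the two defining relations of $V_t$ along a section gives at $t=0$ precisely $d\dot{A}=0$ and $\dot{A}^{0,1}=\mu A$, i.e.\ the $(\mu,A)$-tameness conditions. Hence $(\mu,\dot{A})\in Z(X,A)$ if and only if $\dot{A}$ is tangent to $\bigsqcup_t V_t$ over $\partial_t$, and by submersivity every such $\dot{A}$ is the velocity of a genuine section $t\mapsto A_t\in\slo(\Omega^1(X^{t\mu}))$, which is the desired path. This realization — that an arbitrary tamed $\dot{A}$ is the first-order variation of honestly $X^{t\mu}$-holomorphic forms — is the step I expect to be the main obstacle, since it is where solvability of the underlying $\bar{\partial}$-problem, equivalently the smoothness and constant rank of the period bundle, is used.

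Surjectivity of $\tau$ then follows quickly: any class in $T_{(X,A)}\Syst_g$ lifts to a path in $\mathcal{M}$; after reparametrising its Teichm\"uller component to the model line $t\mapsto X^{t\mu}$ carrying the same first-order datum $\mu$, its velocity is represented by $\dot{A}=\tfrac{d}{dt}\big|_{0}(f^{t\mu})^{*}A_t$, which is tamed by the computation above, so the class equals $\tau(\mu,\dot{A})$.

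For (b) I would compute $T_{(X,A)}(\mathcal{G}\text{-orbit})$ and match it to $B(X,A)$. The infinitesimal $\mathrm{Diff}_0$-action along a vector field $v$ moves the complex structure by $\mu=\bar{\partial}v$ and moves $A$ by the Lie derivative $\mathcal{L}_v A=d(\iota_v A)$ (using $dA=0$); writing $T=\iota_v A:X\to\slc$ this contributes $(\bar{\partial}v,\,dT)$. The infinitesimal $\SLC$-action along $\xi\in\slc$ fixes $J$ and moves $A$ by $[A,\xi]\in[A,\slc]$. Thus the orbit tangent space is $\{(\bar{\partial}v,\,dT+[A,\xi])\}$, visibly contained in $B(X,A)$. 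For the reverse inclusion take $(\mu,\dot{A})\in B(X,A)$: then $\mu\sim 0$ gives $\mu=\bar{\partial}v$ for a smooth $v$, and $\dot{A}-dS\in[A,\slc]$ for some $S:X\to\slc$. Comparing $(0,1)$-parts and using tameness, $\bar{\partial}S=\dot{A}^{0,1}=\mu A=\bar{\partial}(\iota_v A)$, so $S-\iota_v A$ is a holomorphic $\slc$-valued function on the compact surface $X$, hence constant; therefore $dS=d(\iota_v A)$ and $\dot{A}=d(\iota_v A)+[A,\xi]$ for some $\xi$, lying in the orbit tangent space. This identifies $\ker\tau=B(X,A)$ and, via the translation of the second bullet into the relation $\sim$ of Section~1, completes the isomorphism $T_{(X,A)}\Syst_g\cong Z(X,A)/\!\sim$.
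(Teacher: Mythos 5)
Your strategy is genuinely different from the paper's, and its core computations are sound. The paper never introduces the infinite-dimensional space $\mathcal{M}$ of pairs $(J,A)$: it works throughout in the finite-dimensional bundle $\mathcal{H}_g\otimes\slc\to\Teich_g$, constructs the path in (\ref{eq:path-exists}) by prescribing the real periods $[\mathrm{Re}(A_t)]$ and invoking smoothness of the Gauss--Manin connection (the derivative then equals $\dot A$ because a holomorphic $1$-form with exact real part vanishes), and computes the kernel in two steps: finite-dimensional GIT for the $\SLC$-action produces the $[A,\slc]$-term, while the Teichm\"uller direction is handled by a period-matrix argument showing that a pair killed by $\tilde\tau$ (the lift of $\tau$ to $T(\mathcal{H}_g\otimes\slc)$) has $\mu$ trivial and $\dot A$ exact. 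Your part (b) computation --- identifying $B(X,A)$ with the tangent space to the $\mathrm{Diff}_0(\Sigma_g)\ltimes\SLC$-orbit via Cartan's formula $\mathcal{L}_vA=d(\iota_vA)$, and the observation that $S-\iota_vA$ is a holomorphic function on a compact surface, hence constant --- is correct, and it is conceptually attractive: it explains \emph{why} the $dT$-terms appear in $B(X,A)$, something the paper's period argument leaves implicit.

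There are, however, two gaps, one substantive and one minor. The substantive one: you reduce the hard inclusion $\ker\tau\subseteq B(X,A)$ to the identity $\ker\tau=T_{(X,A)}(\mathcal{G}\text{-orbit})$, and you take that identity for granted as part of the quotient presentation. In your setting this is \emph{not} standard geometric invariant theory: $\mathcal{G}$ contains $\mathrm{Diff}_0(\Sigma_g)$ and $\mathcal{M}$ is infinite-dimensional, so the equality of $\ker Dq$ with the image of the infinitesimal action requires a slice theorem of Earle--Eells type; that is exactly where the analytic content of the lemma lives, so as written you have assumed away the hard half of the statement. The gap is repairable: note that $\mathcal{M}$ is the pullback of $\mathcal{H}_g\otimes\slc$ along the Earle--Eells principal $\mathrm{Diff}_0(\Sigma_g)$-fibration $\{J\}\to\Teich_g$, hence $\mathcal{M}\to\mathcal{H}_g\otimes\slc$ is itself a principal $\mathrm{Diff}_0(\Sigma_g)$-bundle, and only the finite-dimensional GIT statement (the one the paper actually uses) remains. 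The minor gap: in part (a), differentiating the defining relations of $V_t$ along a section shows that velocities of sections are $(\mu,A)$-tamed, but not the converse, so your ``if and only if'' is unjustified as stated; to finish, observe that both the set of velocities of sections through $(0,A)$ and the set of $(\mu,A)$-tamed forms are affine subspaces modelled on the same $3g$-dimensional space $V_0=\slo(\Omega^1(X))$ of closed $(1,0)$-forms, so the containment forces equality. (The paper sidesteps this by building one explicit section with prescribed derivative, rather than showing every tamed form is a velocity.)
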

We split the proof of Lemma \ref{lm:tangent-syst} into two parts. We construct the path with property (\ref{eq:path-exists}) in \S\ref{subsec:constr-path}, and then we describe the kernel of $\tau$ in \S\ref{subsec:ker-tau}.
\subsection{Constructing the path}\label{subsec:constr-path}
The path we construct will in fact be in 
\begin{align*}
    \{(X, A):X\in\Teich_g, A\in\mathfrak{sl}_2(\Omega^1(X))\}
\end{align*}
rather than $\Syst_g$. Note that this space is isomorphic to the total space of $\mathcal{H}_g\otimes\slc$, where $\mathcal{H}_g\to\Teich_g$ is the Hodge bundle of Abelian differentials over a varying Riemann surface, i.e. the fibre of $\mathcal{H}_g$ over $X\in\Teich_g$ is $\Omega^1(X)$.   
\par Let $A_t$ be a path of $\slc$ valued holomorphic 1-forms on $X^{t\mu}$, chosen so that $[\mathrm{Re}(A_t)]\in H^1(\Sigma_g,\slr)$ depends smoothly on $t$, and has $\left.\frac{d}{dt}\right|_{t=0}[\mathrm{Re}(A_t)]=[\mathrm{Re}(\dot{A})]$. By the smoothness of the Gauss--Manin connection on $\mathcal{H}_g$, it follows that $\left(f^{t\mu}\right)^*A_t$ depend smoothly on $t$, and hence we can define  
    \begin{align*}
        B= \left.\frac{d}{dt}\right|_{t=0} \left(f^{t\mu}\right)^*A_t.
    \end{align*}
    Since the 1-forms $A_t$ are holomorphic, they are also closed, and hence so is $B$. Moreover, $\mathrm{Re}(\dot{A}-B)$ is exact by construction. 
    \begin{claim}\label{claim:b}
        We have $B^{0,1}=\mu A$.
    \end{claim}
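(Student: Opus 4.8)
The plan is to reduce the claim to a first-variation computation for the quasiconformal maps $f^{t\mu}$, carried out in local coordinates. First I would fix a local holomorphic coordinate $z$ on $X$ and $w$ on $X^{t\mu}$, and write $f=f^{t\mu}$ in these coordinates as $w=f(z,\bar z,t)$, so that the Beltrami equation reads $f_{\bar z}=t\mu f_z$. Since each entry of $A_t$ is a holomorphic $1$-form on $X^{t\mu}$, locally $A_t=a_t(w)\,dw$ with $a_t$ holomorphic (and matrix-valued), and the pullback becomes
\begin{align*}
(f^{t\mu})^*A_t = a_t(f)\,\big(f_z\,dz + f_{\bar z}\,d\bar z\big).
\end{align*}
Relative to the complex structure of $X$, the $dz$ term is of type $(1,0)$ and the $d\bar z$ term of type $(0,1)$, so the $(0,1)$-part is exactly $a_t(f)\,f_{\bar z}\,d\bar z$.

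Next I would differentiate this $(0,1)$-part at $t=0$. At $t=0$ the Beltrami coefficient vanishes, so $f^0$ is conformal and, with the normalisation making $t\mapsto f^{t\mu}$ a path based at the identity, $f|_{t=0}=\mathrm{id}$; hence $f_z|_{t=0}=1$, $f_{\bar z}|_{t=0}=0$, and $a_0(f)|_{t=0}=a_0(z)$, the local coefficient of $A$. Differentiating the Beltrami equation $f_{\bar z}=t\mu f_z$ at $t=0$ yields the key relation $\frac{d}{dt}\big|_{t=0}f_{\bar z}=\mu$. Applying the product rule to $a_t(f)\,f_{\bar z}$ and using $f_{\bar z}|_{t=0}=0$ annihilates the term involving the variation of $a_t(f)$, leaving
\begin{align*}
B^{0,1} = \left.\frac{d}{dt}\right|_{t=0}\!\big(a_t(f)\,f_{\bar z}\big)\,d\bar z = a_0(z)\,\mu\,d\bar z.
\end{align*}

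Finally I would recognise $a_0\,dz=A$ and $a_0\,\mu\,d\bar z=\mu A$ in these coordinates, and observe that the identity is coordinate-independent since both sides are globally defined $(0,1)$-forms, which completes the proof. The computation is essentially routine; the one point worth emphasising is that, because $f_{\bar z}$ vanishes at $t=0$, the first-order $(0,1)$-part of the pullback is produced entirely by the first variation of $\bar\partial f^{t\mu}$ and is insensitive to \emph{how} the holomorphic forms $A_t$ vary — so the answer depends only on $A$ and $\mu$, which is exactly why the conclusion holds for the particular path chosen above. I expect the only (minor) obstacle to be keeping the matrix-valued and coordinate-patching bookkeeping clean, rather than any genuine analytic difficulty.
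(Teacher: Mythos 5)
Your proof is correct and takes essentially the same approach as the paper: both work in a local coordinate, use the Beltrami equation $f_{\bar z}=t\mu f_z$ to identify the $(0,1)$-part of the pullback, and differentiate at $t=0$, where the vanishing of $f_{\bar z}$ at $t=0$ makes the answer insensitive to the variation of $A_t$. The only cosmetic difference is that the paper first records the exact identity $\tilde{A}_t^{0,1}=t\mu\,\tilde{A}_t^{1,0}$ and then differentiates it, whereas you apply the product rule directly to $a_t(f)\,f_{\bar z}\,d\bar z$.
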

    \begin{proof}
        We work in a local coordinate $z$ on $X$, and abuse notation slightly to denote the Beltrami form by $\mu\frac{d\bar{z}}{dz}$ in this coordinate. Then $\tilde{A}_t:=\left(f^{t\mu}\right)^*A_t$ is proportional to $f^{t\mu}_z\left(dz+t\mu d\bar{z}\right)$. In particular, 
            \begin{align*}
               \tilde{A}_t^{0,1}=t\mu \tilde{A}_t^{1,0}.
            \end{align*}
        Differentiating at $t=0$, we see that $B^{0,1}=\mu A$, as desired.
    \end{proof}
    In particular, it follows that $B-\dot{A}$ is a closed form of type $(1,0)$, and is therefore holomorphic. Since $B-\dot{A}$ has exact real part, it follows by Hodge theory that $B-\dot{A}=0$. We have thus constructed a path with (\ref{eq:path-exists}).
\subsection{Surjectivity and the kernel of $\tau$}\label{subsec:ker-tau} 
We first show the surjectivity of $\tau$. Given any tangent vector $V\in T_{(X,A)}(\mathcal{H}_g\otimes\slc)$, let $(X^{t\mu},A_t)$ be a path in $\mathcal{H}_g\otimes\slc$ tangent to $V$ at $(X,A)$. Note that each $A_t$ is closed, so is $\left(f^{t\mu}\right)^*A_t$. Thus $\dot{A}:=\left.\frac{d}{dt}\right|_{t=0}\left(f^{t\mu}\right)^*A_t$ is closed, and by Claim \ref{claim:b} has $(0,1)$-part equal to $\mu A$. Thus $\dot{A}$ is $(\mu, A)$-tamed and hence $V=\tau(\mu, \dot{A})$. Thus $\tau$ is surjective.
\par We now show that $\ker\tau=B(X,A)$. Since the paths constructed in \S\ref{subsec:constr-path} lie in $\mathcal{H}_g\otimes\slc$, $\tau$ factors as 
\begin{align*}
    Z(X,A)\stackrel{\tilde{\tau}}{\longrightarrow}T_{(X,A)}(\mathcal{H}_g\otimes\slc)\to T_{(X,A)}\Syst_g.
\end{align*}
Since $\rho_A$ is irreducible, the point $(X,A)$ is stable for the $\SLC$-action on $\mathcal{H}_g\otimes\slc$. Thus $\ker\tau=\tilde{\tau}^{-1}\left((0,[A, \slc])\right)=(0, [A, \slc])+\ker\tilde{\tau}$
by standard geometric invariant theory. In the rest of this subsection, we show that 
\begin{align*}
    \ker\tilde{\tau}=\{(\mu,\dot{A})\in Z(X,A):\mu\text{ vanishes in }T_X\Teich_g\text{ and }\dot{A}\text{ is exact}\},
\end{align*} 
which implies $B(X,A)=\ker\tau$ immediately.
\par Suppose first that $(\mu,\dot{A})\in\ker\tilde{\tau}$. Then $\mu$ corresponds to the zero tangent vector in $T_X\Teich_g$. By the construction of the path in \S\ref{subsec:constr-path}, it follows that $\mathrm{Re}(\dot{A})$ is exact.  Since the period matrix of  $X^{t\mu}=X$   remains constant up to terms of order $o(t)$ (for $\mu$ is zero in the $T\Teich_g$), we see that $\mathrm{Im}(\dot{A})$ is exact as well.
   \par Conversely, if $\mu$ vanishes in $T_X\Teich_g$ and $\dot{A}=dT$, then the cohomology class $[\dot{A}]\in H^1(\Sigma_g, \slc)$ vanishes. Thus the path in \S\ref{subsec:constr-path} represents a path of holomorphic 1-forms on a fixed Riemann surface, with the same cohomology. Thus the path is constant, and $(\mu, \dot{A})\in\ker\tilde{\tau}$.

 \section{The representation variety and de Rham isomorphism}
 In this section, we explain the tangent space to the representation variety $\Rep_g$ and the de Rham isomorphism for local systems. 
\subsection{Tangent space to $\Rep_g$}\label{subsec:tangent-rep} The description of $T\Rep_g$ is much more standard than $T\Syst_g$, and can be found for instance in \cite{goldman}. We summarise the results here for the sake of completeness. 
\par Given an irreducible representation $\rho:\pi_1(\Sigma_g)\to\SLC$, the tangent space $T_\rho\Rep_g$ can be described as $H^1(\mathrm{Ad}_\rho):=Z(\mathrm{Ad}_\rho)/B(\mathrm{Ad}_\rho)$, where 
\begin{align*}
    Z(\mathrm{Ad}_\rho)=\{\eta:\pi_1(\Sigma_g)\to\slc\text{ with }\eta(\gamma_1\gamma_2)=\eta(\gamma_1)+\rho(\gamma_1)\eta(\gamma_2)\rho(\gamma_1)^{-1}\},
\end{align*} 
and 
\begin{align*}
    B(\mathrm{Ad}_\rho)=\{\rho T\rho^{-1}-T\text{ for }T\in\slc\}.
\end{align*}
Given an element $\eta\in Z(\mathrm{Ad}_\rho)$, there exists a smooth path $\rho_t:\pi_1(\Sigma_g)\to\SLC$ such that 
\begin{align*}
    \left.\frac{d}{dt}\right|_{t=0}\rho_t(\gamma)=\eta(\gamma)\rho(\gamma),
\end{align*}
and this path is tangent to $\eta+B(\mathrm{Ad}_\rho)\in H^1(\mathrm{Ad}_\rho)$.
\subsection{De Rham isomorphism for local systems}\label{subsec:derham}
There is a standard correspondence between flat vector bundles $E$ of rank $n$ over a manifold $M$, and representations $\rho:\pi_1(M)\to\mathrm{GL}_n(\mathbb{C})$ of its fundamental group. Taking the derivative of this correspondence gives an isomorphism $H^1(M, \mathcal{E})\cong H^1(\pi_1(M), \mathrm{Ad}_\rho)$, where $\mathcal{E}$ is the sheaf of parallel sections of $E$. In this section, we describe this isomorphism explicitly. We omit the proofs as they are standard application of homological algebra. A much more general result can be found in Proposition 6.3 in \cite{g-m}.
\par We first describe how $H^1(M,\mathcal{E})$ and $H^1(\mathrm{Ad}_\rho)$ can be understood explicitly. Consider the de Rham resolution of $\mathcal{E}$
\begin{align*}
    \mathcal{E}\to E\stackrel{d^\nabla}{\longrightarrow} E\otimes T^*M\stackrel{d^\nabla}{\longrightarrow} E\otimes\bigwedge\nolimits^2 T^*M.
\end{align*}
This is a soft resolution, and its sections can hence be used to compute the cohomology 
\begin{align*}
    H^1(\Sigma_g,\mathcal{E})=\frac{\{A\in C^\infty(E\otimes T^*\Sigma_g):d^\nabla A=0\}}{\{d^\nabla B: B\in C^\infty(E)\}}.
\end{align*} Also recall that $H^1(\mathrm{Ad}_\rho)=Z(\mathrm{Ad}_\rho)/B(\mathrm{Ad}_\rho)$, where 
$$
Z(\mathrm{Ad}_\rho)=\{\eta:\pi_1(\Sigma_g)\to\mathbb{C}^n\text{ such that }\eta(\gamma_1\gamma_2)=\eta(\gamma_1)+\rho(\gamma_1)\eta(\gamma_2)\rho(\gamma_1)^{-1}\},
$$
and $$
B(\mathrm{Ad}_\rho)=\{\rho(\gamma)\eta\rho(\gamma)^{-1}-\eta:\eta\in\mathbb{C}^n\}.
$$
\par We now describe the de Rham isomorphism $\iota:H^1(M,\mathcal{E})\to H^1(\mathrm{Ad}_\rho)$. Let $p:\tilde{M}\to M$ be the universal cover, and $\tilde{E}=p^*E$ be the pullback of $E$. Note that the deck group action of $\pi_1(M)$ on $\tilde{M}$ naturally extends to an action on $\tilde{E}$. Fix an arbitrary basepoint $x\in \tilde{M}$.
\par Given a $d^\nabla$-closed $E$-valued 1-form $A$, the class $\iota([A])$ can be computed as follows. Since $\tilde{E}$ is the trivial flat bundle, it admits a section $T$ such that $dT=p^*A$. It is easily seen that $\gamma T-T$ is parallel for any $\gamma\in\pi_1(M)$, and hence 
\begin{align*}
    \gamma\longrightarrow (\gamma T-T)(x)
\end{align*}
defines a cycle in $Z(\mathrm{Ad}_\rho)$. The image of this cycle in $H^1(\mathrm{Ad}_\rho)$ is $\iota([\chi])$.
\section{Proof of Theorem \ref{thm:main}}
\subsection{Trivializing the flat $\SLC$-bundle}
We now describe the setup in which we will prove Theorem \ref{thm:main}.
\par Let $X$ be a Riemann surface and $A\in \mathfrak{sl}_2(\Omega^1(X))$. Let $\rho:\pi_1(X)\to\SLC$ be the monodromy of $d+A$. The following lemma shows how to trivialize $(\underline{\SLC}, d+A)$. Denote by $\tilde{X}$ the universal cover of $X$. This lemma is entirely contained in the literature (see e.g. the introduction to \cite{c-d-h-l}), but we include a proof for completeness.
\begin{lemma}\label{lm:trivialize}
    There exists a map $F:\tilde{X}\to\SLC$ such that 
    \begin{align*}
        dF+AF=0\text{ and }F(\gamma x)=F(x)\rho(\gamma)^{-1}.
    \end{align*}
    Then the map $\Phi:\tilde{X}\times\SLC \to \tilde{X}\times\SLC $ defined by $\Phi(x, T)=(x, F(x)T)$ is an isomorphism of principal bundles with the following two properties
    \begin{enumerate}
        \item $\Phi$ conjugates the $\pi_1(X)$-action $\gamma\cdot(x,T)=(\gamma x,\rho(\gamma)T)$ to the action $\gamma\cdot(x,T)=(\gamma x, T)$, and 
        \item $\Phi^*(d+A)=d$. 
    \end{enumerate} 
\end{lemma}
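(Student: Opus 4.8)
The plan is to integrate the flat connection $d+A$ on the simply connected universal cover, and then to read off the monodromy from the obstruction to descending the resulting trivialization. First I would record that $\nabla=d+A$ is flat. On the Riemann surface $X$ its curvature $dA+A\wedge A$ vanishes: we have $dA=0$, since $\bar\partial A=0$ by holomorphicity of $A$ and $\partial A$ is of type $(2,0)$ and hence zero, while $A\wedge A=0$ as a product of two $(1,0)$-forms on a one-dimensional complex manifold. Pulling $A$ back to $\tilde{X}$ (it remains flat and, being pulled back from $X$, deck-invariant: $\gamma^*A=A$ for all $\gamma\in\pi_1(X)$), the linear Pfaffian system $dF=-AF$ is completely integrable, its integrability condition being precisely $dA+A\wedge A=0$. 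By the Frobenius theorem -- equivalently, because a flat connection on a simply connected base admits a global parallel frame -- there is a unique solution $F\colon\tilde{X}\to\mathrm{Mat}_2(\mathbb{C})$ with $F(\tilde{x}_0)=I$ at a chosen basepoint. To see that $F$ takes values in $\SLC$ I would use the Liouville--Jacobi formula: wherever $F$ is invertible, $d(\det F)=\det F\cdot\mathrm{tr}(F^{-1}dF)=-\det F\cdot\mathrm{tr}(A)=0$ since $A$ is traceless, so $\det F\equiv\det F(\tilde{x}_0)=1$ and $F$ is everywhere invertible with values in $\SLC$.

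Next I would establish the equivariance. Because $\gamma^*A=A$, the map $F\circ\gamma$ solves the same system $d(F\circ\gamma)=-A(F\circ\gamma)$. Any two solutions of $dG=-AG$ with $G$ invertible differ by right multiplication by a constant, since a direct computation gives $d(G_1^{-1}G_2)=0$. Hence there is $C_\gamma\in\SLC$ with $F(\gamma x)=F(x)C_\gamma$, and comparing the two ways of evaluating $F((\gamma_1\gamma_2)x)$ yields $C_{\gamma_1\gamma_2}=C_{\gamma_2}C_{\gamma_1}$, so that $\gamma\mapsto C_\gamma^{-1}$ is a homomorphism. It remains to identify this homomorphism with the monodromy $\rho$: the flat section $x\mapsto F(x)v$ is continued to the value $C_\gamma v$ along a lift of a loop representing $\gamma$, so $C_\gamma$ is the holonomy around $\gamma$, and with the convention making $\rho$ a genuine homomorphism this gives $\rho(\gamma)=C_\gamma^{-1}$, i.e. $F(\gamma x)=F(x)\rho(\gamma)^{-1}$.

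Finally I would verify the assertions about $\Phi(x,T)=(x,F(x)T)$. It is a principal-bundle isomorphism, being fibrewise left multiplication by $F(x)\in\SLC$, which commutes with the right $\SLC$-action. For property (1) I compute $\Phi(\gamma x,\rho(\gamma)T)=(\gamma x,F(\gamma x)\rho(\gamma)T)=(\gamma x,F(x)T)$ using the equivariance above, which is exactly the image under the trivial action of $\Phi(x,T)$. For property (2), a section $x\mapsto(x,S(x))$ is $(d+A)$-flat iff $dS=-AS$, i.e. iff $S$ is $F$ times a constant; applying $\Phi^{-1}$ turns such a section into $x\mapsto(x,F(x)^{-1}S(x))$, which is constant and hence $d$-flat, so $\Phi^*(d+A)=d$.

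The analytic input, namely the existence of $F$, is a standard consequence of flatness and simple connectivity. I expect the main obstacle to be purely bookkeeping: matching the constant $C_\gamma$ to the a priori monodromy representation with all conventions (homomorphism versus anti-homomorphism, and the placement of the inverse) chosen consistently, so that the final identity reads $F(\gamma x)=F(x)\rho(\gamma)^{-1}$ rather than one of its variants.
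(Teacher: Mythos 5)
Your proof is correct and establishes everything the lemma asserts, but it reaches the map $F$ by a genuinely different route than the paper. You construct $F$ analytically: you verify flatness ($dA=0$ by holomorphicity, $A\wedge A=0$ because we are on a one-dimensional complex manifold), solve $dF=-AF$ on the simply connected cover via Frobenius with the normalization $F(\tilde{x}_0)=\mathrm{id}$, and use the Jacobi--Liouville identity together with $\mathrm{tr}(A)=0$ to get $\det F\equiv 1$, hence values in $\SLC$; equivariance is then extracted from the abstract cocycle argument $F(\gamma x)=F(x)C_\gamma$ with $C_{\gamma_1\gamma_2}=C_{\gamma_2}C_{\gamma_1}$. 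The paper avoids all of this by defining $F$ column-by-column as the $d_A$-parallel transport of an initial frame $F_0$ chosen so that $\rho$ is the monodromy of $d+A$ relative to the columns of $F_0$; with that choice, both $dF+AF=0$ and $F(\gamma x)=F(x)\rho(\gamma)^{-1}$ hold essentially by the definition of monodromy, and no integrability or determinant argument is needed. Your version buys self-containedness (the facts hidden in the phrase ``parallel transport'' are actually proved); the paper's buys an automatic identification with the given homomorphism $\rho$. That identification is the one loose point in your write-up: with $F(\tilde{x}_0)=\mathrm{id}$, the homomorphism $\gamma\mapsto C_\gamma^{-1}$ is the monodromy computed in the standard frame at $\tilde{x}_0$, so it agrees with a pre-assigned representative $\rho$ only up to conjugation in $\SLC$. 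The fix is one line---replace $F$ by $FF_0$, where $F_0\in\SLC$ conjugates your representative into $\rho$; this is precisely the paper's choice of initial condition---but it should be stated explicitly rather than subsumed under ``conventions''. Your verifications of properties (1) and (2) agree with the paper in substance; for (2), the paper computes the conjugated connection $\nabla(F^{-1}\sigma)$ directly, whereas you compare $\Phi^*(d+A)$ with $d$ through their sheaves of flat sections, which is equally valid since flat sections frame the bundle.
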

\begin{proof}
    Fix an arbitrary basepoint $\tilde{x}_0\in\tilde{X}$, and let $F_0\in\SLC$ be such that $\rho$ is the monodromy of $d+A$ with basepoint $\tilde{x}_0$ relative to the basis that consists of the columns of $F_0$. Define $F(x)$ as follows: for $i=1,2$, its $i$-th column is the $d_A$-parallel transport of the $i$-th column of $F_0$ along an arbitrary path from $\tilde{x}_0$ to $x$. It is immediate that $dF+AF=0$, since $F$ is obtained through parallel transport. Since the monodromy of $d+A$ is precisely $\rho$, it follows that $F(x)=F(\gamma x)\rho(\gamma)$. 
    \par Moreover, if we let $\sigma:\tilde{X}\to\SLC$ be a section of the bundle $(\underline{\SLC}, d_A)$, then
    \begin{align*}
        \nabla(F^{-1}\sigma)&=\sigma^{-1}F d(F^{-1}\sigma)\\
        &=\sigma^{-1}F\left(-F^{-1}(dF) F^{-1}\sigma+F^{-1}d\sigma\right)\\
        &=\sigma^{-1}d\sigma+\sigma^{-1}A\sigma=d_A\sigma,
    \end{align*}
    and the result is shown. 
\end{proof}
\subsection{Computing the derivative}
We now let $(X^{t\mu}, A_t)$ be the path at $(X,A)\in\mathrm{Syst}_g$ tangent to the vector $(\mu,\dot{A})$ with $\dot{A}^{0,1}=\mu A$, as in Lemma \ref{lm:tangent-syst}. Let $F_t:\tilde{X}^{t\mu}\to \mathrm{SL}_2(\mathbb{C})$ be as in Lemma \ref{lm:trivialize}, and suppose that the monodromy of $d+A_t$ is $\rho_t:\pi_1(X^{t\mu})\to\mathrm{SL}_2(\mathbb{C})$. 
\par We use $f^{t\mu}$ to transport $A_t, F_t$ and $\rho_t$ to $X$. Recall that $\left.\frac{d}{dt}\right|_{t=0}A_t=\dot{A}$. We have 
\begin{align*}
    d{F}_t+{A}_t{F}_t=0\text{ and }{F}_t(\gamma x)={F}_t(x)\cdot{\rho}_t(\gamma)^{-1}.
\end{align*}
Let ${F}_t= (\mathrm{id}+t\dot{F}+o(t))\cdot F$ for some map $\dot{F}:\tilde{X}\to\mathfrak{sl}_2(\mathbb{C})$. Then 
\begin{align*}
    d_A\dot{F}+\dot{A}=d\dot{F}+\dot{A}+[A,\dot{F}]=0.
\end{align*}
We thus have $d_A\dot{F}=-\dot{A}$. Similarly, if we set ${\rho}_t(\gamma)=(\mathrm{id}+t\dot{\rho}(\gamma)+o(t))\cdot\rho(\gamma)$, we have
\begin{align*}
    \dot{\rho}(\gamma)&=F(\gamma x)^{-1}\left(\dot{F}(x)-\dot{F}(\gamma x)\right) F(\gamma x).
\end{align*} 
Set $B=F^{-1}\dot{F}F$. Then $\dot{\rho}(\gamma)=\rho(\gamma)B(x)\rho(\gamma)^{-1}-B(\gamma x)$. It is easy to see that $\rho(\gamma)B(x)\rho(\gamma)^{-1}-B(\gamma x)$ does not depend on $x\in\tilde{X}$, so we have 
\begin{align*}
    \dot{\rho}(\gamma)=\rho(\gamma)B(\gamma^{-1}x)\rho(\gamma)^{-1}-B(x).
\end{align*}
From the description of the de Rham isomorphism $H^1(X,\mathcal{E})\to H^1(\mathrm{Ad}_\rho)$ in \S\ref{subsec:derham}, we see that $[\dot{\rho}]=-\iota[\dot{A}]$.

\section{Applications} Recall that Theorem \ref{cor:inj} states that the derivative $D\RH$ is injective on the tangent space 
$T_{(X,A)}\Syst_g$ if and only if $\QD(\alpha,\beta,\gamma)=\QD(X)$,  where 
 \[A=\begin{pmatrix}
        \alpha & \beta \\ \gamma & -\alpha.
    \end{pmatrix}\]
    In this section we derive several applications of this theorem.

    \subsection{Preliminaries}  
  
 Let $X$ be a  Riemann surface of genus at least two, and let $\xi_i\in \Omega^1(X)$, $i=1,...,k$. Let
 $\QD(\xi_1,...,\xi_k)\leq\QD(X)$ be the vector subspace generated by the products $\xi_i\varphi$, $i=1,...,k$, and $\varphi\in \Omega^1(X)$.  The following was proved by Rauch (see  statement C in \cite{rauch}).

\begin{theorem}\label{thm-rauch} Let $X$ be a hyperelliptic Riemann surface of genus at least two, and suppose 
$\xi_1,...,\xi_g$ is a basis of $\Omega^1(X)$. Then $\text{dim}\big(\QD(\xi_1,...,\xi_g)\big)=2g-1$. 
\end{theorem}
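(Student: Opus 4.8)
The key observation is that $\QD(\xi_1,\dots,\xi_g)$ does not depend on the chosen basis: since $\xi_1,\dots,\xi_g$ span $\Omega^1(X)$, the subspace $\QD(\xi_1,\dots,\xi_g)$ is exactly the image of the multiplication map $\Omega^1(X)\otimes\Omega^1(X)\to\QD(X)$, i.e. the span of all products $\omega\omega'$ of holomorphic $1$-forms. So the plan is to compute the dimension of this span, and the whole point of hyperellipticity is that it makes every such product completely explicit.

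Concretely, I would fix a Weierstrass model $y^2=f(x)$ for $X$ with $f$ of degree $2g+2$ having distinct roots, and use the classical basis $\omega_i=\frac{x^{i-1}}{y}\,dx$, $i=1,\dots,g$, of $\Omega^1(X)$. Then $\omega_i\omega_j=\frac{x^{i+j-2}}{f(x)}(dx)^2$, so the span of all products is $V:=\mathrm{span}\{\frac{x^{k}}{f(x)}(dx)^2: 0\le k\le 2g-2\}$, because $i+j-2$ runs over $\{0,1,\dots,2g-2\}$ as $i,j$ range over $\{1,\dots,g\}$. These $2g-1$ differentials are linearly independent, since a relation $\sum_k c_k\frac{x^k}{f(x)}(dx)^2=0$ forces $\sum_k c_k x^k\equiv 0$ and hence every $c_k=0$. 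Therefore $\dim\QD(\xi_1,\dots,\xi_g)=\dim V=2g-1$. Note that each $\omega_i\omega_j$ is a product of holomorphic $1$-forms, so it is automatically a holomorphic quadratic differential and no separate holomorphy check is required for the dimension count; the argument reduces to the two elementary facts that the products depend only on $i+j$ and that distinct monomials are independent.

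Conceptually, $V$ is precisely the $(+1)$-eigenspace of the hyperelliptic involution $\iota$ on $\QD(X)$: one checks that the $\iota$-invariant holomorphic quadratic differentials are exactly $\frac{p(x)}{f(x)}(dx)^2$ with $\deg p\le 2g-2$, using the local analysis at the branch points and at infinity, and all products $\omega\omega'$ land there because $\iota$ acts by $-1$ on $\Omega^1(X)$. This is the structural reason the answer collapses from $\dim\mathrm{Sym}^2\Omega^1(X)=\binom{g+1}{2}$ down to $2g-1$, in sharp contrast with the non-hyperelliptic case. The one genuine input is the structure theory of hyperelliptic curves guaranteeing the affine model and that the $\omega_i$ form a basis; I would regard confirming that $V$ captures the span \emph{exactly} (both bounds on the dimension) as the only real point to nail down, and as noted it is entirely elementary here.
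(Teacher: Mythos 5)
Your proof is correct. One important contextual point: the paper does not prove this statement at all --- it cites it as Rauch's result (statement C in the Rauch reference) and uses it as a black box. So your argument is not a variant of the paper's proof but a self-contained elementary substitute for the citation, and it holds up. The reduction at the start is the right key step: since $\xi_1,\dots,\xi_g$ span $\Omega^1(X)$, the space $\QD(\xi_1,\dots,\xi_g)$ is the image of the multiplication map $\Omega^1(X)\otimes\Omega^1(X)\to\QD(X)$, hence basis-independent, and you may compute with the classical basis $\omega_i=x^{i-1}\,dx/y$ on the Weierstrass model $y^2=f(x)$. The collapse $\omega_i\omega_j=\frac{x^{i+j-2}}{f(x)}(dx)^2$, depending only on $i+j$, gives exactly the $2g-1$ monomial differentials $\frac{x^k}{f(x)}(dx)^2$, $0\le k\le 2g-2$, and their linear independence is immediate since a vanishing combination forces a polynomial identity in $x$. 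Your closing remark identifying this span with the $(+1)$-eigenspace of the hyperelliptic involution on $\QD(X)$ (dimension $2g-1$, complementary to the $(g-2)$-dimensional anti-invariant part, consistent with $3g-3$) is also correct and is the structural explanation for why Max Noether surjectivity fails precisely in the hyperelliptic case --- which is what the paper exploits in its genus $g>2$ application (Theorem \ref{cor-1}). What your approach buys is that the paper's reliance on a 1955 reference can be replaced by a one-page classical computation; what the citation buys the authors is brevity and the broader context of Rauch's moduli-theoretic statement.
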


The next theorem is a classical result.

\begin{theorem}\label{thm-noether} Suppose $g\ge 3$. There exists a dense open subset $U\subset \mathcal{H}_g^{\oplus 3}$
such that the equality $\QD(\alpha,\beta,\gamma)=\QD(X)$ holds for each $(\alpha,\beta,\gamma)\in U$.
\end{theorem}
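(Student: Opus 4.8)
The plan is to reduce the statement to the existence of a single good point, and then to produce such a point. \emph{Openness} of the condition $\QD(\alpha,\beta,\gamma)=\QD(X)$ is immediate: over $\mathcal{H}_g^{\oplus 3}$ the subspace $\QD(\alpha,\beta,\gamma)$ is the image of a morphism of vector bundles, fibrewise the multiplication map $\Omega^1(X)^{\oplus 3}\to\QD(X)$ sending $(\varphi_1,\varphi_2,\varphi_3)$ to $\alpha\varphi_1+\beta\varphi_2+\gamma\varphi_3$. Its rank is lower semicontinuous, so the locus $U$ where it attains the maximal value $\dim\QD(X)=3g-3$ is open. Since $\mathcal{H}_g^{\oplus 3}$ is the total space of a vector bundle over the connected base $\Teich_g$, it is connected, so a nonempty open set is automatically dense. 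It therefore suffices to exhibit one non-hyperelliptic $X$ and one triple $(\alpha,\beta,\gamma)$ for which the multiplication map is surjective.

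The next step is a \emph{reformulation by Serre duality}, exactly as in Section 2 (writing $K$ for the canonical bundle, so $\Omega^1(X)=H^0(X,K)$ and $\QD(X)=H^0(X,K^2)$). The map fails to be surjective if and only if some nonzero functional $\phi\in\QD(X)^{*}\cong H^1(X,K^{-1})$ annihilates $\alpha\,\Omega^1(X)+\beta\,\Omega^1(X)+\gamma\,\Omega^1(X)$. Using the cup product $\Omega^1(X)\otimes H^1(X,K^{-1})\to H^1(X,\mathcal{O})\cong\Omega^1(X)^{*}$, this says precisely that $\alpha,\beta,\gamma$ all lie in the radical of the symmetric bilinear form $Q_\phi$ on $\Omega^1(X)$ given by $Q_\phi(\omega,\eta)=\phi(\omega\eta)$. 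By Max Noether's theorem the multiplication $\mathrm{Sym}^2\Omega^1(X)\to\QD(X)$ is surjective on a non-hyperelliptic $X$, so $\phi\mapsto Q_\phi$ is injective and realises $\QD(X)^{*}$ as a $(3g-3)$-dimensional linear system of symmetric forms on $\Omega^1(X)$. Thus a triple is \emph{bad} exactly when $\langle\alpha,\beta,\gamma\rangle$ is contained in the radical $\operatorname{rad}Q_\phi$ for some nonzero $\phi$.

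Consequently the bad locus in $\Omega^1(X)^{\oplus 3}$ is $\bigcup_\phi(\operatorname{rad}Q_\phi)^{\oplus 3}$, and I would bound its dimension by stratifying according to $k=\dim\operatorname{rad}Q_\phi$: the corank-$k$ stratum contributes a family of dimension at most $\dim\{\phi:\dim\operatorname{rad}Q_\phi\ge k\}+3k$, and one wants each such number to be strictly less than $\dim\Omega^1(X)^{\oplus 3}=3g$. I would carry this out on a general curve $X$, so that the linear system $\{Q_\phi\}$ is as generic as its constraints permit, and hope that every stratum is thin enough to force the bad locus to be a proper subvariety.

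The hard part is that the \emph{naive} expected-codimension estimate is not enough. If the rank loci met $\{Q_\phi\}$ in the expected codimension $\binom{k+1}{2}$, the desired inequality $\big((3g-3)-\binom{k+1}{2}\big)+3k<3g$ would reduce to $(k-2)(k-3)>0$, which \emph{fails}, with equality, at $k=2$ and $k=3$; a full-dimensional bad locus in an irreducible ambient variety would be everything, so these borderline strata must be ruled out. The resolution is to supply the finer classical input: a highly degenerate form $Q_\phi$ forces several independent abelian differentials $\omega$ to satisfy $\omega\cup\phi=0$ simultaneously, which ties the corank of $Q_\phi$ to special linear series on $X$ (Brill--Noether / Petri theory, and the geometry of rational normal scrolls through the canonical curve). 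On a general curve such series are absent or occur in families small enough that the high-corank strata have \emph{strictly greater} than expected codimension; feeding this back makes every stratum have dimension below $3g$. Establishing this codimension bound is the genuine obstacle, and it is where the statement earns its description as classical. Once it is in place the bad locus is proper, a good triple exists, and by the first paragraph the good locus is the required dense open $U\subset\mathcal{H}_g^{\oplus 3}$. (For the genera realised by smooth plane curves one could instead verify surjectivity on an explicit model, but the linear-series input is what treats all $g\ge 3$ uniformly.)
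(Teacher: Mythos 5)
Your proposal is incomplete at precisely the step that carries the mathematical content: producing even one good triple. The skeleton (openness by lower semicontinuity of rank, then one good point forces density) is the right reduction, but when it comes to exhibiting a good point you stratify by corank and then state that ``establishing this codimension bound is the genuine obstacle,'' deferring it to unspecified Brill--Noether/Petri input. That deferred bound \emph{is} the theorem; everything before it is linear algebra. The paper does not reprove this content either, but it discharges it with a precise citation: Theorem 1.1 of Gieseker (equivalently Max Noether's theorem, III.11.20 in Farkas--Kra) asserts that for \emph{every} non-hyperelliptic $X$ the triples with $\QD(\alpha,\beta,\gamma)=\QD(X)$ form a dense open subset of $\big(\Omega^1(X)\big)^3$, and the paper then takes the union of these fibrewise sets over the non-hyperelliptic locus, which is dense in $\Teich_g$ since $g\ge 3$. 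Two further points about your count. First, it has an off-by-one: $\mathrm{rad}\,Q_\phi$ depends only on $[\phi]\in\mathbb{P}\big(\QD(X)^*\big)$, so the corank-$k$ stratum contributes at most $\dim S_k^{\mathbb{P}}+3k$ with $S_k^{\mathbb{P}}$ the \emph{projective} stratum; under your own expected-codimension hypothesis this is $(3g-4)-\binom{k+1}{2}+3k\le 3g-1<3g$ for every $k$, because $3k-\binom{k+1}{2}\le 3$. So the borderline cases $k=2,3$ that you flag are not the problem. Second, the real problem is the direction your sketch takes for granted: expected codimension only gives a \emph{lower} bound for the dimension of such strata, never an upper bound, and excess genuinely occurs here. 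For instance $\mathrm{ev}_p^2$ is orthogonal to every quadric containing the canonical curve, hence lies in the system $\{Q_\phi\}$ for every $p\in X$ (these are the Schiffer variations); thus the rank-one locus is a curve for all $g$, although for $g\ge 6$ its expected codimension $\binom{g}{2}$ exceeds $3g-4$. Any correct argument must bound these excess strata, which is exactly what the classical results being invoked accomplish.

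A separate, smaller error: your justification of density is false as stated --- a nonempty open subset of a connected space need not be dense (an open ball in $\mathbb{R}^n$ is not). What saves the reduction is that the bad locus is a closed \emph{analytic} subset of $\mathcal{H}_g^{\oplus 3}$, locally the common zero set of the maximal minors of a holomorphically varying matrix, and a proper closed analytic subset of a connected complex manifold is nowhere dense. With that correction, and with an actual proof or citation for the existence of a good point, your argument would close; note also that the paper's route yields the stronger fibrewise statement (density in each fibre over every non-hyperelliptic $X$), whereas yours, even when completed on a general curve, gives density only over generic $X$ --- sufficient for the theorem, but weaker.
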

\begin{remark}
Let $p:\mathcal{H}_g^{\oplus 3}\to\Teich_g$ be the natural projection. According to the result of \cite{gieseker}, the set $U$ from Theorem \ref{thm-noether} can in fact be taken such that $U\cap p^{-1}(X)$ is
dense in $p^{-1}(X)$ for any non-hyperelliptic $X\in\Teich_g$. The same result can be recovered from the well known Max Noether theorem (see III.11.20 in \cite{f-k}).
\end{remark}
\begin{proof} Let $X$ be a non-hyperelliptic Riemann surface. By Theorem 1.1 in \cite{gieseker},
the set of triples $(\alpha,\beta,\gamma)\in \big(\Omega^1(X)\big)^{3}$ such that $\QD(\alpha,\beta,\gamma)=\QD(X)$ is an open and dense subset of $\big(\Omega^1(X)\big)^{3}$. Taking the union of these dense open subsets over all non-hyperelliptic Riemann surface in $\Teich_g$ yields the required open and dense subset of $\mathcal{H}_g^{\oplus 3}$.
\end{proof}

\subsection{Two corollaries}

The following is a corollary of Theorem \ref{cor:inj} and Theorem \ref{thm-rauch}. 

\begin{theorem}\label{cor-1} Suppose $X\in \Teich_g$ is hyperelliptic and let $(X,A)\in \Syst_g$.  Then $D\RH$ is injective on the tangent space $T_{(X,A)}\Syst_g$ if and only if $g=2$.
\end{theorem}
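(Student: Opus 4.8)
The plan is to deduce Theorem \ref{cor-1} directly from the characterization in Theorem \ref{cor:inj}, which says that $D\RH$ is injective at $(X,A)$ if and only if $\QD(\alpha,\beta,\gamma)=\QD(X)$. The key point is that $\QD(X)$ has complex dimension $3g-3$, so the injectivity condition is equivalent to $\dim_{\mathbb{C}}\QD(\alpha,\beta,\gamma)=3g-3$. Thus the whole argument reduces to a dimension count for the span of the products $\alpha\varphi,\beta\varphi,\gamma\varphi$ with $\varphi\in\Omega^1(X)$, under the hypothesis that $X$ is hyperelliptic.

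First I would treat the genus $g=2$ case and show injectivity always holds. Since $(X,A)\in\Syst_g$, the representation $\rho_A$ is irreducible, which forces $A$ to be a nonzero matrix of forms; more importantly, irreducibility rules out $A$ being (conjugate to) upper triangular, so the three entries $\alpha,\beta,\gamma$ cannot all be proportional and at least two of them are linearly independent in $\Omega^1(X)$. In genus $2$ we have $\dim\Omega^1(X)=2$ and $\dim\QD(X)=3$, so I would argue that as soon as two of $\alpha,\beta,\gamma$ are linearly independent they form a basis of $\Omega^1(X)$, and the products of a basis of $\Omega^1(X)$ with $\Omega^1(X)$ already span all of $\QD(X)$ — indeed for any basis $\xi_1,\xi_2$ the four products $\xi_1^2,\xi_1\xi_2,\xi_2^2$ span the $3$-dimensional space $\QD(X)$ (this is exactly Rauch's count $2g-1=3$ with $g=2$, applied to the two independent forms among $\alpha,\beta,\gamma$). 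Hence the spanning condition holds and $D\RH$ is injective.

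For the converse direction, namely that when $g\geq 3$ and $X$ is hyperelliptic the map $D\RH$ is never injective, I would invoke Theorem \ref{thm-rauch} as the main tool. Choosing a basis $\xi_1,\dots,\xi_g$ of $\Omega^1(X)$, Rauch's theorem gives $\dim\QD(\xi_1,\dots,\xi_g)=2g-1$, which is strictly less than $\dim\QD(X)=3g-3$ precisely when $g\geq 3$ (since $3g-3-(2g-1)=g-2>0$). The subspace $\QD(\alpha,\beta,\gamma)$ is spanned by products of the three specific forms $\alpha,\beta,\gamma$ with $\Omega^1(X)$, so it is contained in $\QD(\xi_1,\dots,\xi_g)=\QD(\xi_1,\dots,\xi_g)$; more carefully, each of $\alpha,\beta,\gamma$ is a linear combination of the $\xi_i$, so every generator $\alpha\varphi,\beta\varphi,\gamma\varphi$ already lies in the Rauch span. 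Therefore $\QD(\alpha,\beta,\gamma)\subseteq\QD(\xi_1,\dots,\xi_g)$ has dimension at most $2g-1<3g-3=\dim\QD(X)$, so the equality \eqref{eq-span} fails and $D\RH$ is not injective.

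The main obstacle I anticipate is the $g=2$ base case, specifically pinning down precisely why irreducibility of $\rho_A$ guarantees that two of the forms $\alpha,\beta,\gamma$ are linearly independent, so that one genuinely lands in the situation where Rauch's $2g-1=3=\dim\QD(X)$ count gives full spanning rather than a deficient span. One must rule out the degenerate possibility that all three forms are proportional to a single $\omega\in\Omega^1(X)$, for then $A=\omega M$ with $M$ a constant traceless matrix and the monodromy would be reducible (abelian), contradicting $(X,A)\in\Syst_g$. Once that degeneracy is excluded the dimension counts are routine, and the two directions above combine to give the stated equivalence: $D\RH$ is injective on $T_{(X,A)}\Syst_g$ if and only if $g=2$.
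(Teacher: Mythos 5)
Your proposal is correct and follows essentially the same route as the paper: both directions reduce to Theorem \ref{cor:inj} plus Rauch's dimension count, with $g\geq 3$ killed by $\dim\QD(\alpha,\beta,\gamma)\leq 2g-1<3g-3$ and $g=2$ handled by showing irreducibility forces two of $\alpha,\beta,\gamma$ to be a basis of $\Omega^1(X)$, whence Rauch gives $\QD(\alpha,\beta,\gamma)=\QD(X)$. The only (harmless) difference is that you prove the spanning step directly by ruling out $A=\omega M$ with abelian monodromy, whereas the paper cites the observation of Biswas--Dumitrescu.
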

\begin{remark} That  $D\RH$ is injective on the tangent space $T_{(X,A)}\Syst_g$ if  $g=2$ is a well known theorem by  Calsamiglia-Deroin-Heu-Loray \cite{c-d-h-l}.
\end{remark}
\begin{proof} Assume at first that $g>2$. Then by Theorem \ref{thm-rauch} the following inequality holds
$$ 
\text{dim} \big(\QD(\alpha,\beta,\gamma)\big)\le 2g-1<3g-3=\text{dim}\big(\QD(X)\big). 
$$
Thus, the equality (\ref{eq-span}) can never hold on such $X$ regardless of the choice of Abelian differentials $\alpha,\beta,\gamma$.

We now consider the case $g=2$. Since $(X,A)\in \Syst_g$, we know that the monodromy $\rho_A:\pi_1(\Sigma_g)\to \SLC$ is irreducible. Then the three Abelian differentials $\alpha,\beta$, and $\gamma$, span $\Omega^1(X)$. This was observed by Biswas-Dumitrescu (see the proof of Proposition 4.1 in \cite{b-d}). Since $\text{dim}(\QD(X))=2$, without  loss of generality we may assume that $\alpha$ and $\beta$ span $\Omega^1(X)$. But then by Theorem \ref{thm-rauch} we know that the three quadratic differentials $\alpha^2, \alpha\beta, \beta^2$ represent a basis for $\QD(X)$. By Theorem \ref{cor:inj} the derivative $D\RH$ is injective on $T_{(X,A)}\Syst_g$. \end{proof}

In the remainder of this section we let 
$$
I: \mathcal{H}_g^{\oplus 3} \to \mathcal{H}_g\otimes \slc
$$
denote the identification 
 $$
 (\alpha,\beta,\gamma) \,\,\, \mapsto \,\,\, 
 A=\begin{pmatrix}
        \alpha & \beta \\ \gamma & -\alpha
    \end{pmatrix}.     
  $$  

\begin{theorem}\label{cor-2} The derivative $D\RH$ is locally injective on an open and dense subset of $\Syst_g$.
\end{theorem}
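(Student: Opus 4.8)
The plan is to read off the locally-injective locus directly from Theorem \ref{cor:inj} and then show it is open and dense, treating $g=2$ and $g\ge 3$ separately. By Theorem \ref{cor:inj}, the derivative $D_{(X,A)}\RH$ is injective precisely when the spanning condition (\ref{eq-span}) holds. Using the identification $I$, this condition is cut out upstairs by
$$
W=\{(\alpha,\beta,\gamma)\in\mathcal{H}_g^{\oplus 3}:\QD(\alpha,\beta,\gamma)=\QD(X)\},
$$
and $\Syst_g$ is the quotient of the irreducible locus of $\mathcal{H}_g\otimes\slc\cong\mathcal{H}_g^{\oplus 3}$ by the conjugation action of $\SLC$. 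So it suffices to exhibit an open, dense, $\SLC$-invariant subset of this irreducible locus on which (\ref{eq-span}) holds, and then push it down.

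When $g=2$ there is nothing to do: every genus-two Riemann surface is hyperelliptic, so Theorem \ref{cor-1} applies at every point of $\Syst_2$ and shows $D\RH$ is injective on each tangent space; hence the whole of $\Syst_2$ works. When $g\ge 3$, Theorem \ref{thm-noether} supplies an open dense $U\subseteq\mathcal{H}_g^{\oplus 3}$ with $U\subseteq W$. First I would note that $W$ is $\SLC$-invariant: conjugating $A$ by $g\in\SLC$ replaces $(\alpha,\beta,\gamma)$ by constant-coefficient linear combinations of themselves, and these leave the subspace $\alpha\Omega^1(X)+\beta\Omega^1(X)+\gamma\Omega^1(X)$ unchanged. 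Therefore the saturation $\SLC\cdot U$ is open (a union of translates of $U$), dense (it contains $U$), $\SLC$-invariant, and still contained in $W$. Intersecting with the open irreducible locus and applying the quotient map $q$, which is open, then produces an open dense subset of $\Syst_g$ on which (\ref{eq-span}), and hence injectivity of $D\RH$, holds.

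The only step requiring care is the transfer from $\mathcal{H}_g^{\oplus 3}$ down to $\Syst_g$, where the statement actually lives. This rests on three elementary observations that I would verify explicitly: that (\ref{eq-span}) is $\SLC$-invariant, so that it genuinely descends to a subset of $\Syst_g$; that the quotient map by a group action is always open, so that it sends open sets to open sets and, being an open surjection, sends dense sets to dense sets; and that the intersection of a dense set with the open irreducible locus is dense in that locus. I expect the hyperelliptic phenomenon in low genus to be the conceptually interesting feature rather than a genuine obstacle, since in genus two it disappears entirely and for $g\ge 3$ the generic spanning statement of Theorem \ref{thm-noether} does all the work.
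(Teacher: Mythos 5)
Your proposal is correct and follows essentially the same route as the paper: for $g\ge 3$ it combines the open dense set $U$ of Theorem \ref{thm-noether} with Theorem \ref{cor:inj} and the openness of the quotient map $q$ on the irreducible locus, exactly as the paper does. The minor differences are that you explicitly verify the $\SLC$-invariance of condition (\ref{eq-span}) and treat $g=2$ separately via Theorem \ref{cor-1}, both of which the paper leaves implicit --- and since Theorem \ref{thm-noether} assumes $g\ge 3$, your separate genus-two case is a small but genuine gain in completeness.
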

\begin{remark} This is a recent theorem by Biswas-Dumitrescu \cite{b-d}.
\end{remark}
\begin{proof} Let $U\subset \mathcal{H}_g^{\oplus 3}$ be the open dense set from Theorem \ref{thm-noether}. Moreover, let $V$ be the open dense subset of $\mathcal{H}_g\otimes\slc$ that corresponds to irreducible flat connections. Then $I(U)\cap V$ is a dense open subset of $\mathcal{H}_g\otimes\slc$. Since the quotient map $q:V\to V\sslash \SLC$ is open, it follows that $q(I(U)\cap V)$ is the desired dense open set by Theorem  \ref{cor:inj}.
\end{proof}

\end{document}